\documentclass[10pt]{article}
\usepackage{lineno}
\usepackage{amssymb}
\usepackage{etoolbox}
\usepackage{amsmath}
\usepackage{amsthm}
\usepackage{hyperref}
\usepackage{amsfonts}
\usepackage{color}
\usepackage{xcolor}
\usepackage{epsfig}
\usepackage{booktabs}
\usepackage{graphics,subfigure} 
\usepackage{graphicx}
\usepackage{float}
\usepackage{epsf}
\usepackage{bm}
\usepackage{calc}
\usepackage{mathrsfs}
\usepackage[numbers,sort&compress]{natbib}
\usepackage{indentfirst} 
\definecolor{myGreen}{rgb}{0.9, 0.93,0.9}
\newcommand*\linenomathpatch[1]{%
	\cspreto{#1}{\linenomath}%
	\cspreto{#1*}{\linenomath}%
	\csappto{end#1}{\endlinenomath}%
	\csappto{end#1*}{\endlinenomath}%
}

\linenomathpatch{equation}
\linenomathpatch{gather}
\linenomathpatch{multline}
\linenomathpatch{align}
\linenomathpatch{alignat}
\linenomathpatch{flalign}

\newtheoremstyle{thmm}{1.5ex plus 1ex minus .2ex}{1.5ex plus 1ex minus
	.2ex}{\rmfamily}{}{\bfseries}{}{1em}{} \theoremstyle{thmm}
\newtheorem{theorem}{Theorem}[section]
\newtheorem{lemma}{Lemma}[section]

\allowdisplaybreaks
\begin{document}
	\date{\today}
	\title{\bf A new second-order consistent splitting scheme for the Natural Convection equations\thanks{This work is supported by the Natural Science Foundation of Henan Province (No. 252300421989) and the Innovative Research Team of Henan Polytechnic University, China (No. T2024-4).}}
	\author{Zhiyong Si
		\footnote{School of Mathematics and Information Science, Henan Polytechnic University, 454003, Jiaozuo, P.R. China. {\tt sizhiyong@hpu.edu.cn (Z.Y. Si)}
		}\, ,\, Yimei Ma\footnotemark[2], Yunxia Wang\footnotemark[2]}
	\maketitle 
	\begin{abstract}
		A novel second-order consistent splitting scheme is proposed for the natural convection equations. The scheme is constructed using Taylor expansions about the time level $t^{n+k}$, where $k$ is a parameter to be determined. It is proved to be stable for all 
		$k>4$, with the present study focusing primarily on the case $k=5$. Compared with the conventional splitting scheme based on Taylor expansions about $t^{n+1}$, the proposed scheme exhibits improved stability. By employing the Sobolev inequality and Gronwall's lemma, we rigorously establish stability of the proposed scheme and derive error estimates in both two and three spatial dimensions. Finally, several numerical experiments are presented to verify the stability and accuracy of the proposed scheme and to demonstrate its effectiveness.

		\textbf{Keywords:} Natural Convection equations; consistent splitting; finite element method; stability; error analysis
	\end{abstract}
	\section{Introduction}
	\setcounter{equation}{0}
	In this paper, we aim to construct a second-order consistent splitting scheme for the governing equations of natural convection, in which the effects of body forces and heat sources are taken into account.
	\begin{eqnarray}\label{1.1}
		\left\{\begin{array}{lll} 
			\bm{u}_t - \nu\Delta\bm{ u} + (\bm{u} \cdot \nabla) \bm{u} + \nabla p+Gr\nu^{2}g\theta= \bm{f}, \\
			\theta_t  - \lambda\nu\Delta \theta + (\bm{u} \cdot \nabla)\theta= b, \\	 \nabla \cdot\bm{u} = 0. \\
		\end{array}\right.
	\end{eqnarray}
	With the initial conditions:
	\begin{align}
		\bm{u}(x, 0) = \bm{u}_0(x),   \theta(x, 0) = \theta_0(x),   \text{for } x \in \Omega
	\end{align}
	and the boundary conditions:
	\begin{align}
		\bm{u}(x,t)=0, \theta(x,t)=0,  \text{for }(x,t)\in\partial Q_T,
	\end{align}
	where {  \( Q_T = \Omega \times (0, T] \)} with \( \Omega \subset \mathbb{R}^d \) ($d=2,3$) possessing a Lipschitz-continuous boundary \( \partial\Omega \) and \( T > 0 \) is a fixed constant. Here, \( \bm{u} \) denotes the velocity field, \( p \) represents the pressure, and \( \theta \) stands for the temperature; \( \bm{f} \) and \( b \) are the known body force and known heat sources, respectively; \( \nu \) denotes the kinematic viscosity, \( Gr \) is the Grashof number, \( \lambda = Pr^{-1} \) (where \( Pr \) denotes the Prandtl number), and \( g \) is the gravitational acceleration.
	
	Natural convection describes fluid flow driven by temperature differences. The system of natural convection equations comprises a coupled system of Navier-Stokes equations and the energy equation, involving not only the velocity and pressure fields but also the temperature field. It has numerous applications in industry and engineering, such as solar heat absorbers, heat dissipation in electronic devices, and building insulation. Additionally, Equation (1.1) serves as the fundamental governing equations underlying phenomena including  cooling of electronic devices, atmospheric fronts, solar collectors, katabatic winds, atmospheric dynamic systems, natural ventilation and nuclear reactors, and thermal insulation of double-glazed windows \cite{A2011}. Obtaining exact solutions to these equations is highly challenging, prompting a growing number of scholars to investigate their numerical solutions. Numerous studies have been devoted to developing effective methods for natural convection problems (\cite{Li2004,C2006,Wang2006,Luo2003,Huang2012,Huang20122,T1999} and the references therein). However, the numerical solution of natural convection equations is still a challenging undertaking, stemming from their strong nonlinear terms and inherent strong coupling properties.  
	
	In recent years, finite element methods have received increasing attention. In \cite{FU1994,XIN1989}, several finite difference schemes were discussed under the assumption that temperature is treated as a constant, thereby reducing the natural convection equations to the Navier-Stokes equations. For time-dependent conduction-convection problems, Si and Wang \cite{Si2015} integrated the projection method with the modified characteristics-based finite element approach, thereby developing a modified characteristics projection finite element method. In addition, Si et al. \cite{Si2014} proposed a modified characteristics Gauge-Uzawa finite element method (MCGUFE) and established its stability and corresponding error analysis. In \cite{Han2019}, a weak Galerkin finite element method was proposed, and an unconditionally convergent iterative algorithm was presented. Based on mixed finite elements, Luo and Zhu \cite{LUO2003} developed a lowest-order finite difference scheme. In \cite{Si2011}, a semi-discrete defect-correction mixed finite method was considered. In \cite{Si2012}, for the non-stationary conduction-convection problems, Si at el. presented a fully discrete deffect-correction mixed finite element.  Boland and Layton \cite{J1990} studied free and natural convection problems via finite element approximations and provided error analyses. In \cite{LYF2018}, Lei et al. presented a penalty finite element method for two-dimensional stationary conduction-convection problems.  Liang and Zhang \cite{Liang2019} solved a nonlinear natural convection problem on a coarse grid and a linear one on a fine grid. For steady-state natural convection equations, Cibik and Kaya \cite{A2011} formulated a stabilized finite element technique based on projection. Other methods have also been explored, including the defect-correction method \cite{Yun2014} and variational multiscale methods \cite{Yun2013}. Huang et al. \cite{Huang2015} discussed several iterative schemes and established their stability and convergence.    
	
	In recent years, many researchers have begun exploring new iterative methods for time discretization. Rashidi et al. \cite{Rashidi2013} solved the natural convection equations using a multi-step differential transform method. Subsequently, a time discretization method based on an explicit multistage Runge-Kutta scheme was discussed in \cite{Y1999}. He \cite{H2013} applied implicit and explicit Euler iterative schemes to stationary Navier-Stokes equations based on mixed finite elements. Codina \cite{Codina} and Carey \cite{Carey} proposed several iterative methods for 2D steady Navier-Stokes equations.  Guermond and Shen \cite{Gu2003}  conducted a stability analysis for the first-order consistent semi-discrete splitting scheme of the Navier-Stokes equations, which is dependent of time. Additionally, for the first-order consistent splitting scheme, Liu and Pego \cite{J2007,Liu2010} discussed its local-in-time error estimates and stability. Since then, many scholars have attempted to extend these methods to second-order schemes. For instance, a second-order consistent splitting scheme  was introduced in \cite{H2004}, which is primarily designed for Navier-Stokes equations; however, whether this second-order scheme is unconditionally stable remains an open problem. The major difficulty associated with the pressure term is its second-order extrapolation, a problem the third-order pressure-correction scheme also addresses\cite{J1993}. Thus, scholars are seeking new approaches to address this limitation in second-order schemes. J. Girault and Shen \cite{J2003} developed a new category of splitting schemes applicable to incompressible flows, and established the stability properties of the  first-order consistent splitting scheme, which is semi-discrete. Furthermore, Shen et al. \cite{Shen2023} concentrated their efforts on the Navier-Stokes equations, where they performed a comprehensive stability and error analysis for a consistent second-order splitting scheme. 
	
	It is worth noting that both the Adams-Bashforth extrapolation method and the conventional backward differentiation formula (BDF) are both founded on Taylor expansions at  \( t^{n+1} \), which makes the pressure term difficult to tackle because of its second-order extrapolation. Motivated by \cite{Shen2023}, we introduce an undetermined parameter \( k \) and employ Taylor expansions at time \( t^{n+k} \) to develop a new BDF scheme for the natural convection equations. As indicated in \cite{Shen2023}, with an increase in \( k \), the convergence domain of the scheme expands, while the truncation error increases slightly. Therefore, selecting an appropriate \( k \) can enhance the scheme’s stability and improve error estimation.

	What we present in this work is a new second-order consistent splitting scheme, specifically tailored to the natural convection equations governed by no-slip boundary conditions. To be noticed that we perform a rigorous analysis to prove its stability and estimate in two-dimensional (2D) and  three-dimensional (3D) spaces. Our main contributions are summarized as follows:
	
	\indent$\bullet$ First, derived from Taylor expansions at time \( t^{n+5} \), we propose a consistent splitting scheme for the natural convection equations, which is second-order. In contrast to traditional second-order schemes, the presented scheme achieves superior stability.
	
	\indent$\bullet$ Second, we conduct a rigorous proof of the scheme’s  stability in 2D and 3D, leveraging the Sobolev inequality and Gronwall’s lemma. Concurrently, corresponding  error estimates are established for the 2D and 3D cases, respectively.
	
	\indent$\bullet$ Third, we implement a set of numerical cases to evaluate the performance of the newly proposed second-order consistent splitting scheme for the natural convection equations. 
	
	To the best of our knowledge, this paper represents the first time the stability and error analysis of a second-order consistent splitting scheme for the natural convection equations has been rigorously established. In this work, the proposed new scheme, which is constructed from Taylor expansions at time \( t^{n+5} \)-demonstrates enhanced stability.

	The rest of this paper is organized as follows: In the following section outlines the Sobolev spaces and useful lemmas that are utilized throughout the present work. In Section 3, we propose a consistent second-order splitting scheme for the natural convection equations. The scheme is derived using Taylor expansions about the time level $t^{n+k}$, where $k$ is a parameter to be determined. We establish its  stability for $k>4$ and focus primarily on the representative case $k=5$.In Section 4, we analyze the error estimate of the scheme in two dimensions and three dimensions. Finally, in Section 5, several numerical examples are conducted to verify the effectiveness and accuracy of the proposed scheme.
	
	\section{Preliminaries}
	\setcounter{equation}{0}
	This section sets out the notations that are utilized consistently throughout the paper. We denote the inner product by $(\cdot, \cdot)$, and $\Vert \cdot \Vert$ denotes the norm in $L^2(\Omega)$. 	At all the first, we introuce the following Sobolev spaces:
	\begin{align*}
		\bm{H}^{m}(\Omega) = & \left\{ \bm{u} \in  L^2(\Omega) \, ; \, \|\bm{u}\|_{\bm{H}^{m}(\Omega)} < \infty \right\},
	\end{align*}
	equipped with the norm that:
	\begin{align*}
		\|\bm{u} \|_{m(\Omega)} =	\left( \sum_{|\alpha| \leq m} \displaystyle\int_\Omega \left| \bm{D}^\alpha \bm{u} \right|^2 \, dx \right)^{1/2}.
	\end{align*} 
	Moreover, we define the following spaces:
	\begin{align*}
	\bm{X} &:= \bm{H}_0^1(\Omega) = \left\{ \bm{w} \in \bm{H}^1(\Omega) : \left. \bm{w} \right|_{\partial \Omega} = \mathbf{0} \right\},{ \bm{H}:=[H^2(\Omega)\cap H_0^1(\Omega)]^d},\\
	\bm{V}&:= \left\{ \bm{w} \in \bm{X} : \nabla \cdot \bm{w} = 0 \ \text{in} \ \Omega \right\}, 
	M := L_0^2(\Omega) = \{ q \in L^2(\Omega) : \int_{\Omega} q \, d\mathbf{x} = 0 \}.
	\end{align*}
	{ The space $\bm{V}$ is introduced only to state the classical properties
	of the trilinear form. The subsequent analysis does not rely on the
	divergence-free condition.} An arbitrary Banach space is denoted by $\mathcal{V}$ and the notation $L^p(0, T; \mathcal{V})$ is adopted for the standard Lebesgue space. Similarly, the space of continuous functions $C([0, T]; \mathcal{V})$ follows the standard convention. Vectors and vector spaces are distinguished by boldface font. We use $C$ to represent an arbitrary positive constant, which does not depend on the discretization parameters.
	For notational brevity, we often denote $\bm{u}(\bm{x}, t)$ as $\bm{u}(t)$, omitting the exact solution’s spatial dependence
	We define the continuous bilinear forms $a(\cdotp,\cdotp)$ and $d(\cdotp,\cdotp)$ on $\bm{X} \times\bm{X}$ and $\bm{X}\times M$, as in\cite{M1989} :
	\begin{align*}
		a(\bm{u}, \bm{v}) = & \nu(\nabla \bm{u}, \nabla\bm {v}),   \forall \bm{u}, \bm{v }\in \bm{X}, \\
		d(\bm{v}, q) = & (q, \nabla \cdot \bm{v}),   \forall \bm{v} \in \bm{X}, \ \forall q \in M,
	\end{align*}
 and the trilinear form:
$b(\bm{u,v,w})=\int_\Omega(\bm{u}\cdot\nabla)\bm{v}\cdot\bm{w}d\bm{x}$. 
\begin{lemma}[\cite{He2009}]\label{2.1'}
	The following properties are satisfied by the trilinear form $b$:
	\begin{align}
		&b(\bm{u,v,w})=-b(\bm{u,w,v}), { \forall \bm{u,}\in 	\bm{V},\bm{v,w}\in 	\bm{X},}
	\end{align}
	which implies that 
	\begin{align}\label{2.2}
		&	b(\bm{u,v,v})=0, { \forall \bm{u}\in 	\bm{V},\bm{v}\in 	\bm{V}}.
	\end{align}
	By using H\"{o}lder and Sobolev inequalities, for all {  $\bm{u, w} \in\bm{X}, \bm{v}\in \bm{H}$},   there holds that \cite{R1983}
	\begin{align}\label{2.3}
		& b(\bm{u,v,w})\leqslant \Vert\nabla\bm { u}\Vert^{1/2}\Vert \nabla\bm{v} \Vert^{1/2}\Vert\Delta\bm { v} \Vert_{}^{1/2}\Vert\bm {u} \Vert^{1/2}\Vert\bm {w} \Vert,  d=2
	\end{align}  
	\begin{align}\label{2.4}
		b(\bm{u,v,w})\leqslant \Vert\Delta\bm{ v} \Vert_{}^{1/2}\Vert\nabla\bm{v} \Vert^{1/2}\Vert\nabla\bm {u}\Vert\|\bm {w} \Vert,  d=3
	\end{align}
	Additionally, these inequalities are frequently employed in our subsequent analysis\cite{R1983}:
	\begin{align} \label{2.5}
		b(\bm{u, v, w}) \leq \|\bm{u}\|_2^2\|\bm{v}\|_1\|\bm{w}\|_0, \quad{ \forall \bm{u}\in \bm{H}, \bm{v,w} \in\bm{X}}, d=2,3
	\end{align}
	\begin{align}\label{2.6}
		b(\bm{u, v, w}) \leq \|\bm{u}\|_1^2\|\bm{v}\|_2\|\bm{w}\|_0, \quad{ \forall \bm{v}\in \bm{H},  \bm{u,w} \in\bm{X}},d=2,3.
	\end{align}
\end{lemma}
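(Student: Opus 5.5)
The plan is to prove the four groups of claims in Lemma \ref{2.1'} in order: first the skew-symmetry and its corollary \eqref{2.2}, then the two dimension-dependent bounds \eqref{2.3}--\eqref{2.4}, and finally the cruder bounds \eqref{2.5}--\eqref{2.6}.

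\textbf{Skew-symmetry.} I would work first with smooth compactly supported fields and then pass to the limit by density. For $\bm{v},\bm{w}\in C_0^\infty(\Omega)^d$ and $\bm{u}\in\bm{V}$, write
$$(\bm{u}\cdot\nabla)\bm{v}\cdot\bm{w}+(\bm{u}\cdot\nabla)\bm{w}\cdot\bm{v}=\bm{u}\cdot\nabla(\bm{v}\cdot\bm{w}).$$
Integrating by parts gives
$$b(\bm{u,v,w})+b(\bm{u,w,v})=-\int_\Omega(\nabla\cdot\bm{u})(\bm{v}\cdot\bm{w})\,d\bm{x}+\int_{\partial\Omega}(\bm{u}\cdot\bm{n})(\bm{v}\cdot\bm{w})\,ds .$$
Both terms vanish, since $\nabla\cdot\bm{u}=0$ and $\bm{u}|_{\partial\Omega}=0$. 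Next I would extend to $\bm{v},\bm{w}\in\bm{X}$. This uses the continuity of $b$ on $\bm{X}^3$, which follows from H\"older with exponents $(4,2,4)$ and the embedding $H^1\hookrightarrow L^4$ for $d\le 3$. Setting $\bm{w}=\bm{v}$ then gives \eqref{2.2}.

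\textbf{The sharp bounds \eqref{2.3}--\eqref{2.4}.} I would start from H\"older in the form $b(\bm{u,v,w})\le\|\bm{u}\|_{L^p}\|\nabla\bm{v}\|_{L^q}\|\bm{w}\|$ with $1/p+1/q=1/2$.
\begin{itemize}
\item For $d=2$, take $p=q=4$ and apply Ladyzhenskaya's inequality $\|\phi\|_{L^4}\le C\|\phi\|^{1/2}\|\nabla\phi\|^{1/2}$ twice: once to $\bm{u}$, and once to $\nabla\bm{v}$. For the second application I use $\|\nabla^2\bm{v}\|\le C\|\Delta\bm{v}\|$, which is elliptic regularity for $\bm{v}\in\bm{H}$. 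This yields \eqref{2.3}, up to a constant that is absorbed (or normalized) as in \cite{R1983}.
\item For $d=3$, take $p=6$, $q=3$. Then $\|\bm{u}\|_{L^6}\le C\|\nabla\bm{u}\|$ by Sobolev and Poincar\'e. For the other factor, the Gagliardo--Nirenberg inequality gives $\|\nabla\bm{v}\|_{L^3}\le C\|\nabla\bm{v}\|^{1/2}\|\nabla\bm{v}\|_{L^6}^{1/2}\le C\|\nabla\bm{v}\|^{1/2}\|\Delta\bm{v}\|^{1/2}$. Together these give \eqref{2.4}.
\end{itemize}

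\textbf{The bounds \eqref{2.5}--\eqref{2.6}.}
\begin{itemize}
\item For \eqref{2.5}, use $H^2\hookrightarrow L^\infty$ (valid for $d\le3$) to get $b\le\|\bm{u}\|_{L^\infty}\|\nabla\bm{v}\|\|\bm{w}\|\le C\|\bm{u}\|_2\|\bm{v}\|_1\|\bm{w}\|_0$.
\item For \eqref{2.6}, use the $(6,3,2)$ H\"older split: $\|\bm{u}\|_{L^6}\le C\|\bm{u}\|_1$ and $\|\nabla\bm{v}\|_{L^3}\le C\|\bm{v}\|_2$.
\end{itemize}
The squares on $\|\bm{u}\|_2$ and $\|\bm{u}\|_1$ as written in the statement look like typos. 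The natural estimates are linear in each argument, and I would prove the linear versions. These imply the stated forms whenever the relevant norm is at least $1$, or otherwise after adjusting the constant.

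\textbf{Main obstacle.} The only delicate point is justifying $\|\nabla^2\bm{v}\|\lesssim\|\Delta\bm{v}\|$. This needs $H^2$-regularity of the Dirichlet Laplacian, which requires a convex or smooth domain rather than merely a Lipschitz one. I would state this as a standing assumption, or absorb it into the generic constant $C$, following \cite{He2009,R1983}.
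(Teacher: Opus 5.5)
The paper gives no proof of this lemma. It quotes the result from the literature and remarks only that it follows from H\"older and Sobolev inequalities, and your argument is that standard route. The density proof of skew-symmetry is correct, and it actually gives \eqref{2.2} for all $\bm{v}\in\bm{X}$. The $(4,4,2)$ split with Ladyzhenskaya for $d=2$, the $(6,3,2)$ split with $L^3$-interpolation for $d=3$, and the $L^\infty$ and $(6,3,2)$ splits for the last two bounds are also correct, once a generic constant $C$ appears on every right-hand side, as you note.

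\textbf{The squared forms are false.} One claim in your proposal is wrong: the linear bounds do \emph{not} yield the printed \eqref{2.5}--\eqref{2.6} ``after adjusting the constant''. The printed forms are false. To see this, take $\bm{u},\bm{v}$ smooth and compactly supported with $\bm{w}=(\bm{u}\cdot\nabla)\bm{v}\neq 0$, so that $b(\bm{u},\bm{v},\bm{w})=\beta>0$. Replace $\bm{u}$ by $s\bm{u}$. The printed \eqref{2.5} would then read $s\beta\le Cs^2\|\bm{u}\|_2^2\|\bm{v}\|_1\|\bm{w}\|_0$, which fails as $s\to 0^{+}$ for every $C$. The same scaling rules out \eqref{2.6}. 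Your remark about norms at least $1$ is true but does not cover all $\bm{u}$. You should therefore state plainly that the squares are typos and that only the linear bounds hold:
\[
b(\bm{u},\bm{v},\bm{w})\le C\|\bm{u}\|_2\|\bm{v}\|_1\|\bm{w}\|_0,
\qquad
b(\bm{u},\bm{v},\bm{w})\le C\|\bm{u}\|_1\|\bm{v}\|_2\|\bm{w}\|_0 .
\]
These linear forms are also what the paper actually uses in Section 4.

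\textbf{Ladyzhenskaya applied to $\nabla\bm{v}$.} The form $\|\phi\|_{L^4}\le C\|\phi\|^{1/2}\|\nabla\phi\|^{1/2}$ is a statement about $H_0^1$ functions. The function $\phi=\nabla\bm{v}$ has no zero trace, and nonzero constants violate the inequality. You can repair this in either of two ways:
\begin{itemize}
\item use the domain version $\|\phi\|_{L^4}\le C\|\phi\|^{1/2}\|\phi\|_1^{1/2}$, together with $\|\nabla\bm{v}\|_1\le C\|\Delta\bm{v}\|$; or
\item note that $\int_\Omega \partial_j v_i\,d\bm{x}=0$ because $\bm{v}|_{\partial\Omega}=0$, so on a connected domain Poincar\'e--Wirtinger gives $\|\nabla\bm{v}\|_1\le C\|\nabla^2\bm{v}\|$.
\end{itemize}

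Your caveat about the $H^2$-regularity needed for $\|\bm{v}\|_2\le C\|\Delta\bm{v}\|$ is well taken. The paper assumes only a Lipschitz boundary in the introduction, yet its Lemma~2.2 needs a $C^3$ boundary.
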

	In \cite{J2007}, some properties of the Stokes pressure were introduced, which we will use in the process of establishing the stability and error estimate. The $\mathcal{P}$  represents Leray-Helmholtz projection  defined on the $\bm{V}$ with vanishing normal components. Based on that we get the definition of the Stokes pressure $\mathnormal{p}_s = P_s(\bm{u})$, which is as follows:
	\begin{align}
		\nabla\mathnormal{p}_s(\bm{u}) =(\Delta \mathcal{P}-\mathcal{P}\Delta) \bm{u},\quad \forall\ \bm{u} \in \bm{H}^2(\Omega, \mathbb{R}^N),
	\end{align}
	This projection underpins the Helmholtz decomposition of $\bm{u}$, which takes the form $\mathnormal{\bm{u}}=\mathcal{P}\bm{u}+\nabla\varPhi$.
	A key orthogonality property of the projection is given by
	\begin{align}
		(\mathcal{P}\bm{u},\nabla q)=(\bm{u}-\nabla \varPhi,\nabla q)=0    \quad\forall q\in \mathit{H}^1(\Omega).
	\end{align}
	A complete proof of the lemma stated below can be found in the reference \cite {J2007} .
	\begin{lemma}[\cite {J2007}]\label{Lem2.2}
		{\it We consider $\Omega \subset \mathbb{R}^N$ ($N \geq 2$) which is a bounded domain with a $C^3$-smooth boundary. $\forall \varepsilon > 0$,  a nonegative constant $C$ can be found such that the following estimate is established for all vector fields $\bm{u} \in \bm{H}^2(\Omega, \mathbb{R}^N) \cap \bm{H}_0^1(\Omega, \mathbb{R}^N)$,
			\begin{align}
				&\| (\Delta \mathcal{P}-\mathcal{P}\Delta) \bm{u}\|^2 \leq (\frac{1}{2}+\varepsilon) \|\Delta \bm{u}\|^2 +C\|\nabla \bm{u}\|^2.
		\end{align}  }
	\end{lemma}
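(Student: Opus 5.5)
The plan is to follow the Liu--Liu--Pego commutator argument. The estimate concerns the Stokes pressure $p_s=P_s(\bm{u})$, characterized by $\nabla p_s=(\Delta\mathcal{P}-\mathcal{P}\Delta)\bm{u}$. Write $\bm{u}=\mathcal{P}\bm{u}+\nabla\varPhi$ (Helmholtz decomposition) with $\varPhi$ solving the Neumann problem $\Delta\varPhi=\nabla\cdot\bm{u}$, $\partial_n\varPhi=\bm{u}\cdot\bm{n}=0$ on $\partial\Omega$. Then
\begin{align*}
(\Delta\mathcal{P}-\mathcal{P}\Delta)\bm{u}=(I-\mathcal{P})\Delta\bm{u}-\nabla\Delta\varPhi=(I-\mathcal{P})(\Delta\bm{u}-\nabla\nabla\cdot\bm{u}) ,
\end{align*}
using $\Delta\nabla\varPhi=\nabla\nabla\cdot\bm{u}$, which is already a gradient. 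Since $\Delta\bm{u}-\nabla\nabla\cdot\bm{u}=-\nabla\times\nabla\times\bm{u}$, the Stokes pressure is the harmonic function $p_s$ with $\nabla p_s=-(I-\mathcal{P})\,\mathrm{curl}\,\mathrm{curl}\,\bm{u}$. Testing against gradients, $p_s$ satisfies $\Delta p_s=0$ in $\Omega$ with Neumann data $\partial_n p_s=\bm{n}\cdot(\Delta-\nabla\nabla\cdot)\bm{u}$.

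\textbf{Step 1: orthogonality.} Because $(I-\mathcal{P})$ is an $L^2$-orthogonal projection,
\begin{align*}
\|\Delta\bm{u}-\nabla\nabla\cdot\bm{u}\|^2=\|\nabla p_s\|^2+\|\mathcal{P}(\Delta\bm{u}-\nabla\nabla\cdot\bm{u})\|^2 .
\end{align*}
This by itself only gives a constant of order $1$ or larger, not $\tfrac12+\varepsilon$. The sharp constant has to come from the boundary.

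\textbf{Step 2: localize to the boundary.} I would introduce a boundary layer of width $s$ and the distance function $d(x)$, and write $\bm{u}=\bm{u}_{\mathrm{tan}}+\bm{u}_{\mathrm{nor}}$ near $\partial\Omega$ using the extended normal $\bm{n}=\nabla d$. Since $\bm{u}=0$ on $\partial\Omega$, the relevant Neumann data $\bm{n}\cdot\mathrm{curl}\,\mathrm{curl}\,\bm{u}$ involves only tangential derivatives of the quantity $\partial_n\bm{u}_{\mathrm{tan}}$ together with lower-order curvature terms. The key identity I aim for is that, up to $C(s)\|\nabla\bm{u}\|^2$ and $\varepsilon$-small commutator terms, $\|\nabla p_s\|^2$ is bounded by $\int_\Omega|\nabla\bm{u}_{\mathrm{tan}}\text{-part}|^2$ weighted by a cutoff. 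This is done by representing $p_s$ through its harmonic extension and integrating by parts in the normal direction.

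\textbf{Step 3: the split.} Write $\|\Delta\bm{u}\|^2\approx\|\Delta\bm{u}_{\mathrm{tan}}\|^2+\|\Delta\bm{u}_{\mathrm{nor}}\|^2$ near the boundary, and show that $\|\nabla p_s\|^2\le\|\Delta\bm{u}_{\mathrm{tan}}\|^2$-like bound while simultaneously $\|\nabla p_s\|^2\le\|\ldots\|$ controlled via the curl-curl term. Averaging the two bounds gives the factor $\tfrac12$. The $\varepsilon$ absorbs the cross terms through Young's inequality, with $C=C(\varepsilon,s,\Omega)$. The $C^3$ boundary is used to make $d$ a $C^3$ function, which gives bounded curvature derivatives in the commutators.

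\textbf{Main obstacle.} The main difficulty is Step 3: obtaining exactly $\tfrac12$ rather than some constant $C\ge1$. This requires the precise harmonic-function bound $\|\nabla p\|_{L^2}^2\le\int|\partial_n p|\,|\ldots|$ in a thin layer, that is, a Rellich/Pohozaev-type identity for harmonic $p_s$ relating normal and tangential derivatives on $\partial\Omega$. I would try to carry out this identity first, and fall back on the detailed computation of \cite{J2007} if the direct approach fails.
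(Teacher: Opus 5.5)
\textbf{There is a genuine gap, and it sits where the lemma's content lies.} The paper gives no proof of its own; it only cites \cite{J2007}, so your sketch has to stand on its own.

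\textbf{What is correct.} Your preliminaries are right: $(\Delta\mathcal{P}-\mathcal{P}\Delta)\bm{u}=(I-\mathcal{P})(\Delta-\nabla\nabla\cdot)\bm{u}=\nabla p_s$, and $p_s$ is harmonic with Neumann data $\bm{n}\cdot(\Delta-\nabla\nabla\cdot)\bm{u}$. Step 1 is also correct, though not sharp, as you say.

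\textbf{Why Step 3 fails.} Averaging $\|\nabla p_s\|^2\le A$ and $\|\nabla p_s\|^2\le B$ gives $\tfrac12\|\Delta\bm{u}\|^2$ only if $A+B\lesssim\|\Delta\bm{u}\|^2$. That forces $B$ to be a normal-part quantity. No such bound exists, because $p_s$ does not see the normal part of $\bm{u}$ at all. Indeed, let $\psi\in H^2(\Omega)$ vanish on $\partial\Omega$ and set $\bm{w}=\psi\bm{n}$. Then $\partial_jw_i=(\partial_n\psi)n_in_j$ on $\partial\Omega$, which is symmetric. So for smooth $q$ (hence, by density, for all $q\in H^1(\Omega)$)
\[
\int_\Omega(\Delta-\nabla\nabla\cdot)\bm{w}\cdot\nabla q=\int_{\partial\Omega}n_j(\partial_jw_i-\partial_iw_j)\,\partial_iq-\int_\Omega(\partial_jw_i-\partial_iw_j)\,\partial_i\partial_jq=0 .
\]
Combined with the trivial case of fields vanishing near $\partial\Omega$, this gives $\nabla p_s(\bm{u})=\nabla p_s(\bm{v})$ with $\bm{v}=\xi\big(\bm{u}-(\bm{u}\cdot\bm{n})\bm{n}\big)$, where $\xi$ is a cutoff supported in a boundary layer of width $s$. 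In the half-space, a field $\bm{u}=(\bm{u}_T,0)$ has zero normal part but in general $p_s\neq0$. Meanwhile, a bound of the form $\|\nabla p_s\|^2\le\|\Delta\bm{u}_{\mathrm{tan}}\|^2+\dots$ only has constant $1$.

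\textbf{Step 2 as written.} This step is not viable either. $\nabla p_s$ carries two derivatives of $\bm{u}$. Rescaling $\bm{u}$ toward a flat piece of $\partial\Omega$ therefore shows it cannot be controlled by a weighted $\int|\nabla\bm{u}_{\mathrm{tan}}|^2+C\|\nabla\bm{u}\|^2$ plus arbitrarily small multiples of $\|\Delta\bm{u}\|^2$.

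\textbf{The missing idea.} The factor $\tfrac12$ comes from the harmonic function $p_s$, not from splitting $\bm{u}$. Since $\bm{v}=0$ on $\partial\Omega$ and $\bm{v}\cdot\bm{n}\equiv0$, we have $\nabla\cdot\bm{v}\in H_0^1(\Omega)$, so harmonicity removes the $\nabla\nabla\cdot$ term. Also $\bm{n}\cdot\Delta\bm{v}=-2\nabla\bm{n}:\nabla\bm{v}-\bm{v}\cdot\Delta\bm{n}$ is first order; this is where the $C^3$ boundary enters. Hence
\[
\|\nabla p_s\|^2=\int_\Omega\nabla p_s\cdot\Delta\bm{v}=\int_\Omega\nabla_{\parallel}p_s\cdot\Delta\bm{v}+O\big(\|\nabla p_s\|\,\|\bm{u}\|_1\big),\qquad\nabla_{\parallel}p=\nabla p-(\bm{n}\cdot\nabla p)\bm{n}.
\]
What is then needed is a \emph{volume} equipartition estimate for harmonic functions near the boundary. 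Let $d$ be the distance function and $X=\chi d\nabla d$, with $\chi=1$ on $\mathrm{supp}\,\bm{v}$. Every harmonic $p$ satisfies
\[
\nabla\cdot\big(X|\nabla p|^2-2(X\cdot\nabla p)\nabla p\big)=\chi\big(|\nabla_{\parallel}p|^2-|\partial_np|^2\big)+O\big(d|\nabla p|^2\big)+(\text{terms supported where }\nabla\chi\neq0),
\]
and the flux through $\partial\Omega$ vanishes because $d=0$ there. This gives $\int_\Omega\chi|\nabla_{\parallel}p_s|^2\le(\tfrac12+Cs)\|\nabla p_s\|^2+(\text{lower order})$. Cauchy--Schwarz then yields $\|\nabla p_s\|^2\le(\tfrac12+Cs)\|\Delta\bm{v}\|^2+\dots$. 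Since $\|\Delta\bm{v}\|\le\|\Delta\bm{u}\|+C_s\|\bm{u}\|_1$, and the remaining terms (including interior values of the harmonic $p_s$) are of lower order, interpolation gives $(\tfrac12+\varepsilon)\|\Delta\bm{u}\|^2+C\|\nabla\bm{u}\|^2$.

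In the half-space this is exact: $\|\nabla_Tp\|^2=\|\partial_Np\|^2=\tfrac12\|\nabla p\|^2$, so $\|\nabla p_s\|^2\le\tfrac12\|\Delta\bm{u}_T\|^2$. So the Rellich-type identity you point to should be this $d$-weighted volume identity, not the boundary one comparing $\int_{\partial\Omega}|\partial_np|^2$ with $\int_{\partial\Omega}|\nabla_Tp|^2$. Without it, and without the reduction to $\bm{v}$, your Steps 2--3 do not produce the constant $\tfrac12$.
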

	The following are two Gronwall lemmas that will be utilized in the subsequent analysis.
	\begin{lemma}[discrete Gronwall lemma 1 \cite{J2012}]\label{Lem2.3}
			{\it Let $g^n,h^n,b^n,f^n$ be nonnegative sequences  such that $\delta tg^n<1$ for all $n$  satisfy the inquality
			\begin{align}
				&g^N+\delta t\sum_{n=0}^N h^n\leq\delta t\sum_{n=0}^{N}(b^ng^n+f^n)+B,  
				\quad \forall0\leq N \leq T/\delta t,
			\end{align}
			and suppose the boundedness condition $ \delta t\sum_{n=0}^{T/\delta t}b^n \leq M$ holds. then the following estimate is valid
			\begin{align}
				&g^N+\delta t\sum_{n=0}^N h^n\leq \exp(\sigma M)(B+\delta t\sum_{n=0}^{N}f^n), \quad \forall N\leq T/\delta t,
			\end{align}  
			where we denote $\sigma=\max_{0\leqslant n\leqslant T/\delta t}(1-\delta t b^n)^{-1}$.}
	\end{lemma}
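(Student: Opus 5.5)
We prove Lemma \ref{Lem2.3} by induction on $N$, using the standard device of absorbing the implicit term $\delta t\, b^N g^N$ on the left. Set
\begin{align*}
S^N := \delta t\sum_{n=0}^{N}(b^n g^n + f^n) + B,
\end{align*}
so the hypothesis reads $g^N+\delta t\sum_{n=0}^N h^n\le S^N$. Since $h^n\ge 0$, it suffices to bound $S^N$. The target is $S^N \le \exp(\sigma M)\bigl(B+\delta t\sum_{n=0}^N f^n\bigr)$, and this bound then transfers directly to the left-hand side.

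\textbf{Step 1: a one-step recursion.} Write $F^N := B+\delta t\sum_{n=0}^N f^n$. From $S^N - S^{N-1} = \delta t\,(b^N g^N + f^N)$ and $g^N\le S^N$ we get
\begin{align*}
S^N \le S^{N-1} + \delta t\, b^N S^N + \delta t f^N .
\end{align*}
The condition $\delta t\, b^N<1$ lets us divide by $1-\delta t\, b^N$:
\begin{align*}
S^N \le (1-\delta t\, b^N)^{-1}\bigl(S^{N-1}+\delta t f^N\bigr).
\end{align*}
Note that the lemma states the smallness condition as $\delta t\, g^n<1$. This must be a typo for $\delta t\, b^n<1$, since $\sigma$ is defined through $(1-\delta t\, b^n)^{-1}$, and I will use the latter.

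\textbf{Step 2: iteration.} Iterate the recursion down to $S^{-1}:=B$; the base case is $S^0 \le (1-\delta t\, b^0)^{-1}(B+\delta t f^0)$. This gives
\begin{align*}
S^N \le \prod_{n=0}^N (1-\delta t\, b^n)^{-1}\, F^N .
\end{align*}
To justify this, observe that each factor is at least $1$, and $F^N$ is increasing in $N$. Hence the inhomogeneous contributions $\delta t f^m$, which acquire only the partial products $\prod_{n=m}^N$, can each be bounded by the full product.

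\textbf{Step 3: bounding the product.} This is the only delicate point. Use $-\log(1-x) \le x/(1-x)$ for $0\le x<1$, with $x=\delta t\, b^n$. Then
\begin{align*}
\log(1-\delta t\, b^n)^{-1} \le \delta t\, b^n\,(1-\delta t\, b^n)^{-1} \le \sigma\,\delta t\, b^n .
\end{align*}
Summing over $n$ and using the boundedness condition $\delta t\sum b^n\le M$ gives
\begin{align*}
\prod_{n=0}^N (1-\delta t\, b^n)^{-1} \le \exp(\sigma M).
\end{align*}
Combining this with Steps 1 and 2 yields the claim.

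The main obstacle is exactly this product estimate. The naive bound $(1-x)^{-1}\le e^{x}$ is false, and the factor $\sigma$ is precisely what repairs it. The inequality $-\log(1-x)\le x/(1-x)$ holds because, writing $\phi(x) = x/(1-x) + \log(1-x)$, we have $\phi(0)=0$ and $\phi'(x) = x/(1-x)^2 \ge 0$.
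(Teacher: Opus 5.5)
Your proof is correct and complete. The paper gives no proof of its own (the lemma is quoted from \cite{J2012}). Your argument is the standard proof of this inequality: absorb $\delta t\,b^N g^N$ into the left, iterate $S^N\le(1-\delta t\,b^N)^{-1}(S^{N-1}+\delta t f^N)$, and control $\prod_{n=0}^N(1-\delta t\,b^n)^{-1}$ through $-\log(1-x)\le x/(1-x)$. Your Step 3 in fact gives the sharper factor $\exp\bigl(\delta t\sum_{n=0}^N b^n(1-\delta t\,b^n)^{-1}\bigr)$, and $\exp(\sigma M)$ follows from it.

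You are right that $\delta t\,g^n<1$ must be read as $\delta t\,b^n<1$. Without that reading $\sigma$ need not be defined and the statement fails. The last step, multiplying $F^N$ by the product bound, is legitimate because $F^N\ge 0$. This holds even if $B\ge 0$ is not assumed, since $0\le S^N\le \prod_{n=0}^N(1-\delta t\,b^n)^{-1}F^N$.
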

	\begin{lemma}[discrete Gronwall lemma 2 \cite{JS1990}]
		{\it Let $a_n,b_n,c_n,d_n$ be four nonnegative sequences and let  $C$ and $\delta t$ be positive constant such that the following inequality holds:
			\begin{align}
				&a_N+\delta t\sum_{n=1}^N b_n\leq\delta t \sum_{n=0}^{N-1}a_nd_n+\delta t\sum_{n=0}^{N-1} c_n+C, \quad\forall N\geq1,
			\end{align}
			then the following estimate holds:
			\begin{align}
				&a_N+\delta t\sum_{n=1}^N b_n\leq \exp(\delta t\sum_{n=0}^{N-1} d_n)(C+\delta t\sum_{n=0}^{N-1}c_n),  \quad\forall N\geq 1 .
		\end{align}  }
	\end{lemma}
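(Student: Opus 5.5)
We prove this discrete Gronwall inequality by a standard induction argument, in the spirit of the continuous Gronwall lemma: we introduce the partial sums of the right-hand side as an auxiliary sequence, show that it grows at most geometrically, and then compare geometric growth with the exponential. Set, for $N\geq 1$,
\begin{align*}
S_N := \delta t\sum_{n=0}^{N-1} a_n d_n+\delta t\sum_{n=0}^{N-1} c_n+C ,
\end{align*}
and $S_0:=C$. The hypothesis reads $a_N+\delta t\sum_{n=1}^N b_n\leq S_N$ for all $N\geq1$. Because the $b_n$ are nonnegative, this gives in particular $a_N\leq S_N$ for $N\geq 1$.

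The first step is the one-step recursion
\begin{align*}
S_{N+1}=S_N+\delta t\, a_N d_N+\delta t\, c_N\leq (1+\delta t\, d_N)S_N+\delta t\, c_N,
\end{align*}
which follows from $a_N\leq S_N$ and $d_N\geq 0$. The case $N=0$ needs care, since the hypothesis says nothing about $a_0$. We use $S_1=C+\delta t\, a_0d_0+\delta t\, c_0$. The statement is meant to include $a_0\le C$ (the standard version, as in \cite{JS1990}). If this is not granted as stated, we would either read the hypothesis as also covering $N=0$, or absorb the term $\delta t\, a_0 d_0$ into the constant. This is the only delicate point; everything else is mechanical.

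The second step iterates the recursion and uses $1+x\leq e^{x}$:
\begin{align*}
S_N\leq \prod_{n=0}^{N-1}(1+\delta t\, d_n)\,S_0+\delta t\sum_{m=0}^{N-1}c_m\prod_{n=m+1}^{N-1}(1+\delta t\, d_n)
\leq \exp\Big(\delta t\sum_{n=0}^{N-1}d_n\Big)\Big(C+\delta t\sum_{n=0}^{N-1}c_n\Big).
\end{align*}
Each product is bounded by the full exponential because every factor is at least $1$. Finally, the hypothesis $a_N+\delta t\sum_{n=1}^N b_n\leq S_N$ transfers this bound to the left-hand side, which gives the claim.

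An alternative is to prove $S_N\le \exp(\delta t\sum_{n<N}d_n)(C+\delta t\sum_{n<N}c_n)$ directly by induction on $N$. We expect the main obstacle to be only the bookkeeping at the initial index, as noted above.
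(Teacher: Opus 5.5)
Your argument is correct once $a_0\le C$ is available, and it is the standard proof. With $S_N$ as you define it, $a_N\le S_N$ holds for all $N\ge 0$. The recursion $S_{N+1}\le(1+\delta t\,d_N)S_N+\delta t\,c_N$ then holds from $N=0$ on, and unrolling it with $1+x\le e^x$ gives exactly the stated bound. The paper states this lemma without proof and only cites the literature, so there is no competing route to compare against.

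What you should change is how you treat $N=0$. It is not bookkeeping, and it is not a matter of how to read the statement: as printed, the lemma is false. Take $\delta t=C=d_0=1$, $a_0=10$, $a_1=11$, and set all other $a_n,b_n,c_n,d_n$ equal to $0$. The hypothesis holds for every $N\ge 1$: at $N=1$ it reads $11\le 10+1$, and for $N\ge 2$ the left-hand side vanishes. Yet the conclusion at $N=1$ would give $11\le e$.

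So you must add the hypothesis $a_0\le C$ explicitly, rather than presenting it as what the statement ``is meant to include''. Equivalently, require the assumed inequality also for $N=0$, where the empty sums reduce it to exactly $a_0\le C$. Whether or not the cited source states this condition, the counterexample shows it cannot be dropped. Your fallback of absorbing $\delta t\,a_0 d_0$ into the constant should also go. It proves a true but different estimate, with $C+\delta t\,a_0 d_0$ in place of $C$ on the right-hand side, not the one stated.
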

	For the three-dimensional case, the following lemma is critical to our proofs of weak stability and local-in-time error estimates.
	\begin{lemma}[\cite{J2010}]\label{Lem2.5}
		{\it For any given  \(T^*\) and $T^*\in(0,\int_{M}^{\infty} \frac{dz}{\Psi(z)})$, where $\Psi$  be a continuous and increasing function, which is defined from $(0, \infty) $ to $ (0, \infty)$ and let \(M \) be a positive constant.  
			Suppose that quantities \(a_n, w_n \geq 0\) and a positive constant  \(G_* \) that is independent of the time step  \(\tau > 0\) satisfy 
			\[
			a_n + \sum_{i = 0}^{n - 1} \tau \, w_i \leq y_n , \quad\forall 0\leq n \leq n_*,
			\]
			where we denote $y_n := M + \sum_{i = 0}^{n - 1} \tau \, \Psi(a_i)$
			with \(n_* \tau \leq T^*\). Then \(y_n \leq G_*\).} 
	\end{lemma}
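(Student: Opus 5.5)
Lemma~\ref{Lem2.5} is a nonlinear discrete Gronwall (Bihari-type) lemma, and I would prove it by comparing the discrete sequence $y_n$ with the solution of the scalar ODE $z'(t)=\Psi(z(t))$, $z(0)=M$. Introduce
\[
F(y):=\int_M^{y}\frac{dz}{\Psi(z)},\qquad y\ge M .
\]
Since $\Psi$ is positive, continuous and increasing, $F$ is strictly increasing and continuous from $[M,\infty)$ onto $[0,\int_M^\infty dz/\Psi(z))$. Because $T^*$ lies strictly inside this range, we may define $G_*:=F^{-1}(T^*)<\infty$. This constant depends only on $M$, $\Psi$ and $T^*$, so it is independent of $\tau$.

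The steps are as follows.

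\emph{Step 1.} Dropping the nonnegative sum $\sum_{i=0}^{n-1}\tau w_i$ in the hypothesis gives $a_n\le y_n$ for $0\le n\le n_*$.

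\emph{Step 2.} By monotonicity of $\Psi$, $\Psi(a_n)\le\Psi(y_n)$. Hence
\[
y_{n+1}-y_n=\tau\Psi(a_n)\le\tau\Psi(y_n),
\]
and the sequence $y_n$ is nondecreasing with $y_0=M$.

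\emph{Step 3.} Compare increments of $F$. Since $1/\Psi$ is nonincreasing and $y_{n+1}\ge y_n$,
\[
F(y_{n+1})-F(y_n)=\int_{y_n}^{y_{n+1}}\frac{dz}{\Psi(z)}\le \frac{y_{n+1}-y_n}{\Psi(y_n)}\le\tau .
\]
This is the key step: the left-endpoint (explicit Euler) increment sits on the favorable side because $1/\Psi$ is decreasing.

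\emph{Step 4.} Summing the estimate of Step 3 from $0$ to $n-1$ and using $F(y_0)=F(M)=0$ gives
\[
F(y_n)\le n\tau\le n_*\tau\le T^*
\]
for all $n\le n_*$. Strict monotonicity of $F$ then yields $y_n\le F^{-1}(T^*)=G_*$.

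The main point to watch is that the argument must not assume the iterates stay below the blow-up level before it is proved. This is handled automatically: $F(y_n)\le T^*$ lies in the range of $F$, so $y_n$ is finite and bounded by $G_*$ at every step of the induction. Here $n_*$ must satisfy $n_*\tau\le T^*$, which is exactly the hypothesis. A possible subtlety at Step 2, if $\Psi$ were only nondecreasing, is harmless because only $1/\Psi$ being nonincreasing is needed.
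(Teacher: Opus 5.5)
Your proof is correct and complete. From $a_n\le y_n$ and the monotonicity of $\Psi$ you get $\int_{y_n}^{y_{n+1}}dz/\Psi(z)\le \tau\Psi(a_n)/\Psi(y_n)\le\tau$. Telescoping then gives $\int_M^{y_n}dz/\Psi(z)\le n\tau\le T^*$, hence $y_n\le G_*:=F^{-1}(T^*)$. This is the value at $T^*$ of the solution of $z'=\Psi(z)$, $z(0)=M$, so it is clearly independent of $\tau$.

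The paper gives no proof of its own; it simply imports the lemma from \cite{J2010}. Your comparison argument is the standard proof of this Bihari-type inequality.

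Two cosmetic points only. First, $G_*$ should be read as asserted to exist, depending only on $M$, $\Psi$ and $T^*$, rather than assumed as the statement's wording suggests. Second, if some $a_n=0$, you must extend $\Psi$ to $0$ (e.g.\ by its right limit) for $\Psi(a_n)$ to make sense.
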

	
	\section{A New Second-Order Consistent Splitting Scheme for Natural Convection Equations And Its Stability}
	\setcounter{equation}{0}
	We propose a consistent splitting scheme, which is second-order accurate, for the natural convection equations in this section, and provide a rigorous proof of its  stability in 2D and  3D.
	
	\subsection{A new second-order scheme for natural convection equations}
	
Derived from Taylor expansions at time $ t^
{n+k}$, where $k$ is any positive integer, we propose a consistent second-order splitting scheme for the natural convection equations. Appropriately selecting \( k \) allows us to establish the new scheme's unconditional stability. As noting in \cite{Shen2023}, with an increase in \( k \), the convergence domain of the scheme expands, while its truncation error increases slightly. Given a fixed positive integer \( N \) define the time step as \( \delta t = T/N \); here, \( t^n = n\delta t \) (for \( n = 1, 2, \dots, N \)) and denote the discrete time levels. By expanding \( \varPsi(t^{n+1}) \), \( \varPsi(t^n) \), and \( \varPsi(t^{n-1}) \) we can get the following equations by using the Taylor expansion at \( t^{n+k} \).
	
	\begin{align}
		&\frac{(2k+1)\varPsi(t^{n+1})-4k\varPsi(t^n)+(2k-1)\varPsi(t^{n-1})}{2\delta t}=\varPsi^{'}(t^{n+k})+\frac{1-3k^2}{6}\varPsi^{'''}(t^{n+k})\delta t^2+O(\delta t^3).
	\end{align}
	By expanding $\varPsi(t^{n+1})$ and $\varPsi(t^n)$ via the Taylor expansion at $t^{n+k}$, we have
	\begin{align}
		k\varPsi(t^{n+1})-(k-1)\varPsi(t^n)=\varPsi(t^{n+k})-\frac{k(k-1)}{2}\varPsi^{''}(t^{n+k})\delta t^2+O(\delta t^3).
	\end{align}
	Hence, in this paper, we adopt the following approximation of $\varPsi^{'}(t^{n+k})$ and $\varPsi(t^{n+k})$:
	\begin{align}
		&\varPsi^{'}(t^{n+k})\approx\frac{(2k+1)\varPsi(t^{n+1})-4k\varPsi{t}^n+(2k-1)\varPsi(t^{n-1})}{2\delta t},\\
		&\varPsi(t^{n+k})\approx k\varPsi(t^{n+1})-(k-1)\varPsi(t^n).
	\end{align}
	We now establish a new consistent splitting scheme for the natural convection equations, which is second order. For all \( 1 \leq n \leq N \), we denote \( (\bm{u}^n, p^n, \theta^n) \) as the time-discrete approximations to \( (\bm{u}(t^n), p(t^n), \theta(t^n)) \). By decoupling the pressure term from the temperature and velocity terms, we can reduce the computational burden. More precisely, for any positive integer \( k \), the scheme proceeds as follows. Starting from the initial conditions \( (\bm{u}^0, \theta^0) = (\bm{u}_0, \theta_0) \) and \( (\bm{u}^1, \theta^1) \), the sequence \( \{\bm{u}^n, p^n, \theta^n\} \subset \bm{V} \times M \times \bm{X} \) is determined by the following second-order scheme, which satisfies the weak formulation.
	\begin{align}\label{3.5}
		& \frac{1}{2\delta t}\left( (2k+1)\bm{u}^{n+1} - 4k\bm{u}^{n} + (2k-1)\bm{u}^{n-1}, \bm{v} \right) + a\left(k\bm{u}^{n+1} - (k-1)\bm{u}^n, \bm{v}\right)\notag\\
		&  +b\big((k+1)\bm{u}^n - k\bm{u}^{n-1},{ (k+1)\bm{u}^n - k\bm{u}^{n-1}},\bm{v}\big) - d\left(\bm{v}, (k+1)p^n - kp^{n-1}\right)\notag\\
		&  = -Grg\nu^2\left( (k+1)\theta^n - k\theta^{n-1}, \bm{v} \right)+ \left( \bm{f}^{n+k}, \bm{v} \right),
	\end{align}
	\begin{align}\label{3.6}
		& \frac{1}{2\delta t}\left( (2k+1)\theta^{n+1} - 4k\theta^{n} + (2k-1)\theta^{n-1}, \bm{w} \right) + \lambda a\left(k\theta^{n+1} - (k-1)\theta^n, \bm{w}\right)\notag\\
		&   + b\left((k+1)\bm{u}^n - k\bm{u}^{n-1}, { (k+1)\theta^{n} - k\theta^{n-1}}, \bm{w}\right) = \left(b^{n+k}, \bm{w} \right),
	\end{align}
	\begin{align}\label{3.7}
		(\nabla\mathit{p}^{n+1},\nabla\mathit{q})=(\bm{f}^{n+1}-Grg\nu^2\theta^{n+1}-\nu\nabla\times\nabla\times\bm{u}^{n+1}-\bm{u}^{n+1}\cdot\nabla\bm{u}^{n+1},\nabla\mathit{q}).
	\end{align}
	for all $(\bm{v},q,\bm{w})\subset \bm{V}\times M\times\bm{X}$  with $1\leq n \leq N$. 
	We can easily check the scheme is second-order by $(3.1)-(3.2)$ and the fact that 
	\begin{align*}
		(k+1)p(t^{n+1})-kp(t^{n-1})=p(t^{n+k})-\frac{k(k+1)}{2}p^{''}(t^{n+k})\delta t^2+O(\delta t^3).
	\end{align*}
	
	Next, we provide supplementary details regarding $(\bm{u}^1, p^1, \theta^1)$. When $k = 1$, the present scheme reduces to the standard second-order consistent splitting scheme introduced in \cite{Gu2003}. Specifically, a first-order consistent splitting scheme for the natural convection equations based on projection methods was proposed in \cite{Z2016} we use the following scheme to compute $(\bm{u}^1, p^1, \theta^1)$.
	\begin{equation}\label{3.8}
		\left\{
		\begin{aligned}
			&\frac{\tilde{\bm{u}}^1 -\bm{u}^0}{\delta t} - \nu \Delta \tilde{\bm{u}}^1 + (\bm{u}^0 \cdot \nabla)\tilde{\bm{u}}^1+\nabla\Phi^0 = -k \nu^2 g \theta^1 + f(t^1), \\
			&\frac{\theta^1 - \theta^0}{\delta t} - \lambda \nu \Delta \theta^1 + (\bm{u}^0 \cdot \nabla) \theta^1 = b(t^1),\\
			&\left. \tilde{\bm{u}}^1 \right|_{\Gamma} = 0 ,
		\end{aligned}
		\right.
	\end{equation}
	and
	\begin{equation}\label{3.9}
		\left\{
		\begin{aligned}
			&\frac{\bm{u}^1-\tilde{\bm{u}}^1}{\delta t}+\alpha \nabla(\Phi^1-\Phi^0)=0,\\
			&\nabla \cdot \bm{u}^{1}=0,\\
			&\bm{u}^{1}\cdot\bm{n} |_{\Gamma} = 0.
		\end{aligned}
		\right.
	\end{equation}
	Where the $\alpha\geq1$ is a constant.
	For the stability and error estimates at \( n = 0 \), we can derive the following results using the method in\cite{Z2016}:
	\begin{align}
		&\|\nabla\bm{u}^1\|^2 + \|\nabla\theta^1\|^2  + \delta t\left(\|\Delta \bm{u}^1\|^2 + \|\Delta\theta^1\|^2\right) \leq C, \\
		&\|\bm{u}^1 - \bm{u}(t^1)\|^2 + \|\theta^1 - \theta(t^1)\|^2+\delta t(\|\nabla(\bm{u}^1 - \bm{u}(t^1))\|^2 + \|\nabla(\theta^1 - \theta(t^1))\|^2 + \|p^1 - p(t^1)\|^2) \leq C\delta t.
	\end{align}
	Here, \( C \) is a constant independent of \( \delta t \), more precise details can be found in \cite{Z2016}. Next, we will analyze the stability and error estimates for the new second-order consistent splitting scheme in the case of \( 1 \leq n \leq N \).
	
	\subsection{Stability Analysis}
	We perform a stability analysis for the new second-order scheme applied to the natural convection equations.
	\begin{theorem}\label{The3.1}
		{\it  For a fixed $T>0$, assume uniform bounds $\|b(\cdot,t)\|\leq C_b$ and $\|\bm{f}(\cdot,t)\|\leq C_f$, $\forall $$0\leqslant t\leqslant T$. Stability of the scheme $(\ref{3.5})-(\ref{3.7})$ holds when $k>4$ in both two and three dimensions, implying that for all $m\leq N$, 
			\begin{align}
				&\|\nabla\bm{u}^{m+1}\|^2+\|\nabla\theta^{m+1}\|^2+\delta t\sum_{n=0}^{m+1}(\|\Delta\theta^{n}\|^2+\|\Delta\bm{u}^n\|^2+\|\nabla p^{n}\|^2)\leq C_*  \quad    \forall m\leq n\leq\frac{T^*}{\delta t},
			\end{align}
			where the definition of \(T^*\) will be given in (\ref{3.34}).
		}
	\end{theorem}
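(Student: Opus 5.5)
We follow the strategy of Shen et al. \cite{Shen2023} for the Navier--Stokes equations and adapt it to the coupled velocity--temperature system. The plan is to test (\ref{3.5}) with $\bm{v}=-\Delta\bm{u}^{n+1}$ (equivalently with the strong form of the momentum equation) and (\ref{3.6}) with $\bm{w}=-\Delta\theta^{n+1}$. The time-derivative terms produce telescoping quantities in the $H^1$-seminorm. For the generalized BDF operator with parameter $k$ we will use the algebraic identity
\begin{align*}
\big((2k+1)a-4kb+(2k-1)c,\;ka-(k-1)b\big)=\mathcal{E}(a,b)-\mathcal{E}(b,c)+\mathcal{D}(a,b,c),
\end{align*}
where $\mathcal{E}$ is a positive definite quadratic form in $(\nabla a,\nabla b)$ and $\mathcal{D}\ge 0$. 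To exploit it, we rewrite $\Delta\bm{u}^{n+1}$ in terms of the combination $k\bm{u}^{n+1}-(k-1)\bm{u}^n$ that appears in the viscous term. Concretely, we test with $-\Delta(k\bm{u}^{n+1}-(k-1)\bm{u}^n)$ and $-\Delta(k\theta^{n+1}-(k-1)\theta^n)$, so that the diffusion terms give $\nu\|\Delta(k\bm{u}^{n+1}-(k-1)\bm{u}^n)\|^2$ and $\lambda\nu\|\Delta(k\theta^{n+1}-(k-1)\theta^n)\|^2$. A lower bound of the form $c_k(\|\Delta\bm{u}^{n+1}\|^2-\ldots)$ is then recovered after summation in $n$.

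The pressure is the main obstacle. From (\ref{3.7}) and $\nabla\times\nabla\times\bm{u}=-\Delta\bm{u}+\nabla\nabla\cdot\bm{u}$, the pressure satisfies
\begin{align*}
\nabla p^{n+1}=\mathcal{P}^\perp\big(\bm{f}^{n+1}-Grg\nu^2\theta^{n+1}-\bm{u}^{n+1}\cdot\nabla\bm{u}^{n+1}\big)+\nu(\Delta\mathcal{P}-\mathcal{P}\Delta)\bm{u}^{n+1}+\ldots,
\end{align*}
so $\|\nabla p^{n+1}\|$ is controlled by the Stokes pressure. Lemma \ref{Lem2.2} gives $\|\nabla p_s(\bm{u})\|^2\le(\tfrac12+\varepsilon)\|\Delta\bm{u}\|^2+C\|\nabla\bm{u}\|^2$, plus the forcing, temperature and nonlinear contributions. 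The extrapolated pressure $(k+1)p^n-kp^{n-1}$ is paired with $\Delta(k\bm{u}^{n+1}-(k-1)\bm{u}^n)$. By Young's inequality this term is at most $\tfrac{\nu}{2}\|\Delta(\cdot)\|^2$ plus a multiple of $\|\nabla((k+1)p^n-kp^{n-1})\|^2$. The latter in turn involves $(\tfrac12+\varepsilon)\nu\|(k+1)\Delta\bm{u}^n-k\Delta\bm{u}^{n-1}\|^2$.

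The central computation is to show that, after summing over $n$, these extrapolated $\Delta$-terms are absorbed by the dissipation $\sum_n\|\Delta(k\bm{u}^{n+1}-(k-1)\bm{u}^n)\|^2$. This reduces to positivity of a quadratic form in the sequence $\{\Delta\bm{u}^n\}$, whose symbol is a trigonometric polynomial in the shift. Comparing the coefficients, namely $|k+1|+|k|$ with $|k|-|k-1|=1$ weighted by the factor $\tfrac12+\varepsilon$, together with the extra dissipation $\mathcal{D}$ coming from the BDF part, should yield exactly the condition $k>4$. This is the step I expect to be hardest and would check first with the symbol $|k e^{i\xi}-(k-1)|^2$ versus $\tfrac12|(k+1)-ke^{-i\xi}|^2$. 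An analogous but easier computation is needed for the temperature, where no pressure appears.

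The remaining terms are routine. The buoyancy and forcing terms are bounded by Young's inequality using $C_f$, $C_b$ and $\|\nabla\theta\|$. The convective terms are bounded with (\ref{2.3}) in 2D and (\ref{2.4}) in 3D, for example
\begin{align*}
b(\bm{w},\bm{w},\Delta\bm{z})\le \epsilon\|\Delta\bm{w}\|^2+\epsilon\|\Delta\bm{z}\|^2+C\|\nabla\bm{w}\|^{6}\quad (d=3),
\end{align*}
and with a similar quartic bound in 2D. The initial step is handled by the bounds for $(\bm{u}^1,\theta^1)$ from \cite{Z2016}. Summing gives
\begin{align*}
a_{m+1}+\delta t\sum_{n}w_n\le M+\delta t\sum_{n\le m}\Psi(a_n),
\end{align*}
with $a_n=\|\nabla\bm{u}^n\|^2+\|\nabla\theta^n\|^2$ and $\Psi(z)=C(1+z^3)$ (or $z^2$ in 2D). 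Lemma \ref{Lem2.5} then yields $a_n\le C_*$ for $n\delta t\le T^*<\int_M^\infty dz/\Psi(z)$, which defines $T^*$ in (\ref{3.34}). The bound on $\delta t\sum\|\nabla p^n\|^2$ follows afterwards from the pressure representation above.
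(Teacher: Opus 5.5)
Your proposal is viable, but it takes a genuinely different route from the paper. The step you flag as hardest does close, and for a wider range of $k$ than you predict.

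\textbf{The paper's route.} The paper tests (\ref{3.5}) with $-\Delta((k+1)\bm{u}^{n+1}-k\bm{u}^n)$, which is the same combination that is extrapolated in the pressure. The viscous term then splits as $\frac{k-1}{k}\nu\|\Delta((k+1)\bm{u}^{n+1}-k\bm{u}^n)\|^2+\frac{\nu}{k}\|\Delta\bm{u}^{n+1}\|^2$ plus telescoping terms. The Stokes-pressure bound $(\frac14+\frac{\varepsilon}{2})\|\Delta((k+1)\bm{u}^n-k\bm{u}^{n-1})\|^2$ is exactly the previous step's test-function norm, so an index shift absorbs it step by step. This is where $\frac{k-1}{k}>\frac34$, i.e.\ $k>4$, comes from. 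The price is a non-native multiplier on the BDF term; the paper finds the corresponding telescoping identity (\ref{3.24}) by undetermined coefficients, and only for $k=5$.

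\textbf{Your route.} You use the native one-leg multiplier $k\bm{u}^{n+1}-(k-1)\bm{u}^n$. The viscous term becomes a perfect square, and your G-stability identity holds for every $k\ge1$. Indeed, with $\rho(\zeta)=(\zeta-1)((2k+1)\zeta-(2k-1))$ and $\sigma(\zeta)=\zeta(k\zeta-(k-1))$, one has $\mathrm{Re}\,\rho(\zeta)\overline{\sigma(\zeta)}=2k(2k-1)(1-\cos\xi)^2\ge0$ on $|\zeta|=1$. Dahlquist's theorem then gives $\mathcal{E}$ positive definite and $\mathcal{D}=\frac{k(2k-1)}{2}\|\nabla(a-2b+c)\|^2$. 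The cost is that the pressure term now pairs $A^{n+1}:=\Delta(k\bm{u}^{n+1}-(k-1)\bm{u}^n)$ with a different combination, $B^n:=\Delta((k+1)\bm{u}^n-k\bm{u}^{n-1})$. So no step-by-step absorption is possible, and you need a comparison summed over time.

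\textbf{The central computation.} Apply Parseval to the zero-extended sequence $\{\Delta\bm{u}^n\}$. With $s=1-\cos\xi$, the symbol ratio is $\frac{1+2k(k+1)s}{1+2k(k-1)s}$. It is increasing in $s$ and maximal at $\xi=\pi$, with value $\gamma^2=\big(\frac{2k+1}{2k-1}\big)^2$.
\begin{itemize}
\item The endpoint terms are harmless. The extension adds $k^2\|\Delta\bm{u}^{m+1}\|^2$ to the $B$-side but only $\gamma^2(k-1)^2\|\Delta\bm{u}^{m+1}\|^2$ to the $A$-side, and $\gamma(k-1)<k$.
\item Hence $\sum_{n=1}^m\|B^n\|^2\le\gamma^2\sum_{j=1}^{m+1}\|A^j\|^2+C\|\Delta\bm{u}^0\|^2$ for every $m$, which is the form Lemma \ref{Lem2.5} needs.
\item Your Young splitting then closes if and only if $(\frac12+\varepsilon)\gamma^2<1$, i.e.\ $4k^2-12k+1>0$. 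This holds for every integer $k\ge3$, not ``exactly $k>4$'', so the theorem's range is covered a fortiori.
\end{itemize}

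\textbf{Two corrections to your plan.} The coefficient comparison you mention, $|k+1|+|k|$ against $|k|-|k-1|$, would fail for every $k$; it is the frequency-wise ratio that works. An elementary substitute also suffices for the paper's case: $B^n=\frac{k+1}{k}A^n-\frac1k\Delta\bm{u}^{n-1}$, together with $\sum_{n=1}^m\|\Delta\bm{u}^n\|^2\le\sum_{n=1}^{m+1}\|A^n\|^2+(k-1)\|\Delta\bm{u}^0\|^2$, already gives $k\ge5$.

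\textbf{What each route buys.} The paper's argument is local in $n$ and elementary, but it needs a $k$-specific BDF identity and stops at $k>4$. Yours handles the BDF and viscous parts uniformly in $k$ and gives a sharper threshold, at the cost of a global-in-time $\ell^2$ comparison.

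\textbf{Minor point.} Taking $\Psi(z)=z^2$ in 2D would require an $L^2$ bound on $\bm{u}^n$ that you never derive. $\Psi(z)=C(1+z^3)$ works in both dimensions and, as in the paper, yields only a local time $T^*$.
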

	\begin{proof}
		We discuss the stability under two cases, because some Sobolev inequalities need to be discussed separately in two and three dimensions.
		\subsection*{Case 1: Rigorous proof of  stability for \(d = 2\).}
		First, we discuss the stability of $\bm{u}$ and $p$. In (\ref{3.5}), we set $\bm{v} = -\Delta\left((k + 1)\bm{u}^{n+1} - k \bm{u}^n\right)$. For the viscous term, we have
		\begin{align}\label{3.13}
			&a(k\bm{u}^{n+1}-(k-1)\bm{u}^n,-\Delta((k+1)\bm{u}^{n+1}-k\bm{u}^n))\nonumber\\
			= &\nu(\Delta(k\bm{u}^{n+1}-(k-1)\bm{u}^n),\Delta((k+1)\bm{u}^{n+1}-k\bm{u}^n))\nonumber\\
			= &\nu(\frac{k-1}{k}\Delta((k+1)\bm{u}^{n+1}-k\bm{u}^n)+\frac{1}{k}\Delta\bm{u}^{n+1},\Delta((k+1)\bm{u}^{n+1}-k\bm{u}^n))\nonumber\\
			= &\nu(\frac{k-1}{k}\|\Delta((k+1)\bm{u}^{n+1}-k\bm{u}^n)\|^2+\frac{1}{k}\|\Delta\bm{u}^{n+1}\|^2\nonumber\\
			&+\frac{1}{2}(\|\Delta\bm{u}^{n+1}\|^2-\|\Delta\bm{u}^n\|^2+\|\Delta\bm{u}^{n+1}-\Delta\bm{u}^n\|^2)).
		\end{align}
		For the nonlinear term, by making use of (\ref{2.3}) and Young’s inequality, we obtain the following estimate.
		{ 
		\begin{align}\label{3.14}
			&b((k+1)\bm{u}^n-k\bm{u}^{n-1},(k+1)\bm{u}^n-k\bm{u}^{n-1},-\Delta((k+1)\bm{u}^{n+1}-k\bm{u}^n))\nonumber\\
			\leq & c\|\nabla((k+1)\bm{u}^n-k\bm{u}^{n-1})\|^\frac{1}{2}\|(k+1)\bm{u}^n-k\bm{u}^{n-1}\|^{\frac{1}{2}}\|(k+1)\bm{u}^n-k\bm{u}^{n-1}\|^\frac{1}{2}_2\nonumber\\
			&\|\nabla((k+1)\bm{u}^n-k\bm{u}^{n-1})\|^\frac{1}{2}\|\Delta(k\bm{u}^{n+1}-(k-1)\bm{u}^n)\|\nonumber\\
			\leq & c(\varepsilon)\|\nabla((k+1)\bm{u}^n-k\bm{u}^{n-1})\|^3\|(k+1)\bm{u}^n-k\bm{u}^{n-1}\|_2 +\varepsilon\|\Delta(k\bm{u}^{n+1}-(k-1)\bm{u}^n)\|^2\nonumber\\
			\leq & c(\varepsilon) \|\nabla((k+1)\bm{u}^n-k\bm{u}^{n-1})\|^6+\varepsilon\|\Delta((k+1)\bm{u}^n-k\bm{u}^{n-1})\|^2 \nonumber\\
			&  +\varepsilon\|\Delta((k+1)\bm{u}^{n+1}-k\bm{u}^n)\|^2,
		\end{align}
	}where we have used the inequality \( \| \bm{u}\|_2^2 \leq C \|\Delta \bm{u}\|  \) and Theorem \ref{The3.1} in the last inequality.  For the pressure term, we deduce that
		\begin{align}\label{3.15}
			d(-\Delta((k+1)\bm{u}^{n+1}-k\bm{u}^n),(k+1)p^n-kp^{n-1})
			\leq  \|\nabla((k+1)p^n-kp^{n-1})\|\|\Delta((k+1)\bm{u}^{n+1}-k\bm{u}^n)\|.
		\end{align} 
		We can obtain from \cite{J2007} that
		\begin{align}
			(\nabla p_s(\bm{u}),\nabla q)=-(\nabla\times\nabla\times\bm{u},\nabla q).
		\end{align}
		Then by (\ref{3.7}), we can get that
		\begin{align}
			&(\nabla((k+1)p^n-kp^{n-1}),\nabla q)\nonumber\\
			=&((1+k)\bm{f}^n-k\bm{f}^{n-1}-Gr\nu^2g((k+1)\theta^n-k\theta^{n-1}),\nabla q)\nonumber\\
			&-({ (k+1)\bm{u}^n\cdot\nabla\bm{u}^n+k\bm{u}^{n-1}\cdot\nabla\bm{u}^{n-1}}+\nabla p_s((k+1)\bm{u}^n-k\bm{u}^{n-1}),\nabla q).
		\end{align}
		Now, taking $q=((k+1)p^n-kp^{n-1})$, we obtain that
		\begin{align}\label{3.18}
			&\|\nabla((k+1)p^n-kp^{n-1})\|\nonumber\\
			&\leq \|(1+k)\bm{f}^n-k\bm{f}^{n-1}-Gr\nu^2g((k+1)\theta^n+k\theta^{n-1})-{ (k+1)\bm{u}^{n}\cdot\nabla\bm{u}^n+k\bm{u}^{n-1}\cdot\nabla\bm{u}^{n-1}}\|\nonumber\\
			& \quad+\|\nabla p_s((k+1)\bm{u}^n-k\bm{u}^{n-1})\|.
		\end{align}
		Then, by using the Sobolev  embedding inequality , we have
		\begin{align}\label{3.19}
			&\|(1+k)\bm{f}^n-k\bm{f}^{n-1}-Gr\nu^2g((k+1)\theta^n-k\theta^{n-1})-{ (k+1)\bm{u}^n\cdot\nabla\bm{u}^n+k\bm{u}^{n-1}\cdot\nabla\bm{u}^{n-1}}\|^2 \nonumber\\
			\leq & N(\|(1+k)\bm{f}^n-k\bm{f}^{n-1}\|^2+\|Gr\nu^2g((k+1)\theta^n-k\theta^{n-1})\|^2\nonumber\\
			&+{ (k+1)^2\|\bm{u}^n\cdot\nabla\bm{u}^n\|^2+k^2\|\bm{u}^{n-1}\cdot\nabla\bm{u}^{n-1}\|^2})\nonumber\\
			\leq &  N(\|(1+k)\bm{f}^n-k\bm{f}^{n-1}\|^2+\|Gr\nu^2g((k+1)\theta^n-\theta^{n-1})\|^2+C{ \|\bm{u}^n\|\|\nabla\bm{u}^n\|^2\|\Delta\bm{u}^n\|}\nonumber\\
			&  +{ C\|\bm{u}^{n-1}\|\|\nabla\bm{u}^{n-1}\|^2\|\Delta\bm{u}^{n-1}\|}\nonumber\\
			\leq & N(\|(1+k)\bm{f}^n-k\bm{f}^{n-1}\|^2+\|Gr\nu^2g((k+1)\theta^n-k\theta^{n-1})\|^2)+\varepsilon{ (\|\Delta\bm{u}^n\|^2+\|\Delta\bm{u}^{n-1}\|^2)}\nonumber\\
			&+c(\varepsilon){ (\|\nabla\bm{u}^{n}\|^6+\|\nabla \bm{u}^{n-1}\|^6)},
		\end{align}
		where we have used the following inequality \cite{J2007}
		\begin{align*}
			&\|\bm{u}^n\cdot\nabla\bm{u}^n\|^2\leq\|\bm{u}^n\|_{L_4}^2\|\nabla\bm{u}^n\|_{L_4}^2\leq\|\bm{u}^n\|\|\nabla\bm{u}^n\|^2\|\Delta\bm{u}^n\|,  \quad    d=2.
		\end{align*}
		As a consequence, we can estimate the pressure term by making use of Lemma\ref{Lem2.2} as
		\begin{align}
			&d(-\Delta((k+1)\bm{u}^{n+1}-k\bm{u}^n),(k+1)p^n-kp^{n-1})\nonumber\\
			\leq &\|\Delta((k+1)\bm{u}^{n+1}-k\bm{u}^n)\|(\|(1+k)\bm{f}^n-k\bm{f}^{n-1}-Gr\nu^2g((k+1)\theta^n-k\theta^{n-1})\nonumber\\
			&  { -(k+1)\bm{u}^n\cdot\nabla\bm{u}^n-k\bm{u}^{n-1}\cdot\nabla\bm{u}^{n-1}\|}+\|\nabla p_s((k+1)\bm{u}^n-k\bm{u}^{n-1})\|)\nonumber\\
			\leq &(\|(1+k)\bm{f}^n-k\bm{f}^{n-1}\|+\|Gr\nu^2g((k+1)\theta^n-k\theta^{n-1})\|+{ \|(k+1)\bm{u}^n\cdot\nabla\bm{u}^n\|}\nonumber\\
			&  +\|k\bm{u}^{n-1}\cdot\nabla\bm{u}^{n-1}\|)\|\Delta((k+1)\bm{u}^{n+1}-k\bm{u}^n)\|+\frac{1}{2}\|\nabla p_s\|^2+\frac{1}{2}\|\Delta((k+1)\bm{u}^{n+1}-k\bm{u}^n)\|^2\nonumber\\
			\leq & Nc(\varepsilon)(\|(1+k)\bm{f}^n-k\bm{f}^{n-1}\|^2+\|Gr\nu^2g((k+1)\theta^n-k\theta^{n-1})\|^2)+c(\varepsilon)({ \|\nabla\bm{u}^{n}\|^6+\|\nabla \bm{u}^{n-1}\|^6})\nonumber\\
			&  +\varepsilon(\|\Delta\bm{u}^n\|^2+\|\Delta\bm{u}^{n-1}\|^2+\|\Delta((k+1)\bm{u}^{n+1}-k\bm{u}^n)\|^2)+(\frac{1}{4}+\frac{\varepsilon}{2})\|\Delta((k+1)\bm{u}^{n}-k\bm{u}^{n-1})\|^2\nonumber\\
			&  +C\|\nabla((k+1)\bm{u}^{n}-k\bm{u}^{n-1})\|^2+\frac{1}{2}\|\Delta((k+1)\bm{u}^{n+1}-k\bm{u}^n)\|^2.
		\end{align}
		For the body force and heat sorces term, we have
		\begin{align}\label{3.22}
			(\bm{f}^{n+k},-\Delta((k+1)\bm{u}^{n+1}-k\bm{u}^n)) \leq c(\varepsilon)\|\bm{f}^{n+k}\|^2+\varepsilon\|\Delta((k+1)\bm{u}^{n+1}-k\bm{u}^n)\|^2. 
		\end{align}
		\begin{align}\label{3.23}
			&(Gr\nu^2g((k+1)\theta^n-k\theta^{n-1}),-\Delta((k+1)\bm{u}^{n+1}-k\bm{u}^n))\nonumber\\
			&\leq c(\varepsilon)\|Gr\nu^2g((k+1)\theta^n-k\theta^{n-1})\|^2+\varepsilon\|\Delta((k+1)\bm{u}^{n+1}-k\bm{u}^n)\|^2.
		\end{align}
		To ensure stability, we conclude from (\ref{3.13})-(\ref{3.23}) that the parameter \( k \) satisfies the following constraints:
		\begin{align}
			&\frac{k-1}{k}-4\varepsilon-\frac{1}{2}-\varepsilon>\frac{1}{4}+\frac{\varepsilon}{2}  \quad \text{ and }  \quad \frac{1}{k}>4\varepsilon.\nonumber
		\end{align}
		This implies that \( k > 4 \). Furthermore, $k$ must be a positive integer, as the schemes (\ref{3.5})-(\ref{3.7}) require the use of \(\bm{f}^{n+k}\) and \(b^{n+k}\).
		
		Now, we proceed to deal with the time derivative term. To simplify the proof process, we fix \( k = 5 \), \( \nu = 1 \), and \( \lambda = 1 \). By the method of undetermined coefficients (as in (\ref{3.13})), we derive that
		\begin{align}\label{3.24}
			&(11\bm{u}^{n+1} - 20\bm{u}^n + 9\bm{u}^{n-1}, -\Delta(6\bm{u}^{n+1} - 5\bm{u}^n)) \nonumber\\
			\nonumber
			= & \frac{1}{10} \left( \|\nabla \bm{u}^{n+1}\|^2 - \|\nabla \bm{u}^n\|^2 \right) + \left\| \frac{9\sqrt{10}}{5} \nabla \bm{u}^{n+1} - \frac{\sqrt{90}}{2} \nabla \bm{u}^n \right\|^2 - \left\| \frac{9\sqrt{10}}{5} \nabla \bm{u}^n - \frac{\sqrt{90}}{2} \nabla \bm{u}^{n-1} \right\|^2 \\
			\nonumber
			&  + \left\| \frac{\sqrt{90}}{2} \nabla \bm{u}^{n+1} - \sqrt{90} \nabla \bm{u}^n + \frac{\sqrt{90}}{2} \nabla \bm{u}^{n-1} \right\|^2  + \frac{13}{2} \|\nabla(\bm{u}^{n+1} - \bm{u}^n)\|^2 - \frac{9}{2} \|\nabla(\bm{u}^n - \bm{u}^{n-1})\|^2\nonumber\\
			& + \frac{9}{2} \|\nabla(\bm{u}^{n+1} - 2\bm{u}^n + \bm{u}^{n-1})\|^2.
		\end{align}
		Summing up (\ref{3.13})–(\ref{3.24}) and  neglecting non-essential terms, we obtain that
		\begin{align}\label{3.25}
			& \frac{1}{10} \left( \|\nabla \bm{u}^{n+1}\|^2 - \|\nabla \bm{u}^n\|^2 \right) + \left\| \frac{9\sqrt{10}}{5} \nabla \bm{u}^{n+1} - \frac{\sqrt{90}}{2} \nabla \bm{u}^n \right\|^2+\delta t(\|\nabla\bm{u}^{n+1}\|^2-\|\nabla\bm{u}^{n}\|^2) \nonumber\\
			&   - \left\| \frac{9\sqrt{10}}{5} \nabla \bm{u}^n - \frac{\sqrt{90}}{2} \nabla \bm{u}^{n-1} \right\|^2 + \left\| \frac{\sqrt{90}}{2} \nabla \bm{u}^{n+1} - \sqrt{90} \nabla \bm{u}^n + \frac{\sqrt{90}}{2} \nabla \bm{u}^{n-1} \right\|^2 \nonumber\\
			&   + \frac{13}{2} \|\nabla(\bm{u}^{n+1} - \bm{u}^n)\|^2 - \frac{9}{2} \|\nabla(\bm{u}^n - \bm{u}^{n-1})\|^2 +\frac{8\delta t}{5}\|6\bm{u}^{n+1}-5\bm{u}^n\|^2+\frac{2\delta t}{5}\|\Delta\bm{u}^{n+1}\|^2\nonumber\\
			\leq & c(\varepsilon)\delta t(\|\nabla\bm{u}^n\|^6+\|\nabla\bm{u}^{n-1}\|^6)+(\frac{1}{2}+\varepsilon)\delta t\|\Delta(6\bm{u}^{n}-5\bm{u}^{n-1})\|^2\nonumber\\
			&  +2\varepsilon\delta t(\|\Delta(6\bm{u}^{n}-5\bm{u}^{n-1})\|^2+\|\Delta\bm{u}^n\|^2+\|\Delta\bm{u}^{n-1}\|^2)+c(\varepsilon) \delta t{ \|\nabla(6\bm{u}^n-5\bm{u}^{n-1})\|^2}\nonumber\\
			&  +c(\varepsilon) \delta t\|\nabla(6\bm{u}^n-5\bm{u}^{n-1})\|^6+c(\varepsilon)\delta t(\|\bm{f}^{n+5}\|^2+\|6\bm{f}^n-5\bm{f}^{n-1}\|^2+Gr^2g^2\|6\theta^n-5\theta^{n-1}\|^2).
		\end{align}
		Now, by choosing \( \varepsilon = \frac{1}{100} \) and take the summation for $n=1$ up to $m$ (where $m\leq\frac{T}{\delta t}-1$) in (\ref{3.25}), we obtain
		{ 
		\begin{align}\label{3.26}
			&\left\| \nabla \bm{u}^{m+1} \right\|^2 + \delta t \sum_{n=0}^m \left\| \Delta \bm{u}^n \right\|^2\nonumber\\
			&\leq C \delta t \sum_{n=0}^{m} \Big( \|\bm{f}^{n+5}\|^2 + \big\|6\bm{f}^n - 5\bm{f}^{n-1}\big\|^2 + Gr^2 g^2 \big\|6\theta^n - 5\theta^{n-1}\big\|^2+\left\| \nabla \bm{u}^n \right\|^6 +\left\| \nabla \bm{u}^n \right\|^2\Big) \nonumber \\
			&\leq C \delta t \sum_{n=0}^{m} \Big(\left\| \nabla \bm{u}^n \right\|^6+\left\| \nabla \bm{u}^n \right\|^2 +\left\| \nabla \theta^n \right\|^2\Big)
			+ C T C_f^2, \quad \forall\, 0 \le m \leq N-1.
		\end{align}
		}
		Next, we consider the stability of $\theta$, let$\bm{w}=-\Delta((k+1)\theta^{n+1}-k\theta^n)$ in (\ref{3.6}). As same as  (\ref{3.13}), there holds that
		\begin{align}\label{3.27}
			& a(k\theta^{n+1}-(k-1)\theta^n,-\Delta((k+1)\theta^{n+1}-k\theta^n))\\
			= &\frac{k-1}{k}\|\Delta((k+1)\theta^{n+1}-k\theta^n)\|^2+\frac{1}{k}\|\Delta\theta^{n+1}\|^2+\frac{1}{2}(\|\Delta\theta^{n+1}\|^2-\|\Delta\theta^n\|^2+\|\Delta\theta^{n+1}-\Delta\theta^n\|^2).\nonumber
		\end{align}
		For the nonlinear term, as same as  (\ref{3.14}) we get that
		{ 
		\begin{align}\label{3.28}
			&b((1+k)\bm{u}^n-k\bm{u}^{n-1},(k+1)\theta^{n} - k\theta^{n-1},-\Delta((k+1)\theta^{n+1}-k\theta^n))\nonumber\\
			\leq & c\|\Delta((k+1)\theta^{n+1}-k\theta^n)\|\|\nabla((1+k)\bm{u}^n-k\bm{u}^{n-1})\|^{\frac{1}{2}}\|(1+k)\bm{u}^n-k\bm{u}^{n-1}\|^\frac{1}{2}\nonumber\\
			&  \|\nabla((k+1)\theta^{n} - k\theta^{n-1})\|^\frac{1}{2}\|(k+1)\theta^{n} - k\theta^{n-1}\|^\frac{1}{2}_2\nonumber\\
			\leq & c(\varepsilon)\|\nabla((1+k)\bm{u}^n-k\bm{u}^{n-1})\|^2\|\Delta((k+1)\theta^{n} - k\theta^{n-1}))\|\|\nabla((k+1)\theta^{n} - k\theta^{n-1})\|\nonumber\\
			&  +\varepsilon\|\Delta((k+1)\theta^{n+1}-k\theta^n)\|^2\nonumber\\
			\leq & c(\varepsilon)\|\nabla((1+k)\bm{u}^n-k\bm{u}^{n-1})\|^8+\varepsilon\|\nabla((k+1)\theta^{n} - k\theta^{n-1})\|^4\nonumber\\
			&  +\varepsilon\|\Delta((k+1)\theta^{n+1}-k\theta^n)\|^2+\varepsilon\|\Delta((k+1)\theta^{n} - k\theta^{n-1})\|^2.
		\end{align}}
		For the right-hand term
		\begin{align}\label{3.29}
			&(b^{n+k},-\Delta((k+1)\theta^{n+1}-k\theta^n))\leq c(\varepsilon)\|b^{n+k}\|^2+\varepsilon\|\Delta((k+1)\theta^{n+1}-k\theta^n)\|^2.
		\end{align}
		For consistency with the preceding proof, we choose \(k = 5\) as a specific example, since it satisfies the stability condition \(\frac{k-1}{k} > 3\varepsilon\)  from (\ref{3.27})-(\ref{3.29}).  Following the same method of  (\ref{3.13}), we get that
		\begin{align}\label{3.30}
			&(11\theta^{n+1} - 20\theta^n + 9\theta^{n-1}, -\Delta(6\theta^{n+1} - 5\theta^n)) \nonumber\\
			=& \frac{1}{10} \left( \|\nabla \theta^{n+1}\|^2 - \|\nabla \theta^n\|^2 \right) + \left\| \frac{9\sqrt{10}}{5} \nabla \theta^{n+1} - \frac{\sqrt{90}}{2} \nabla \theta^n \right\|^2 - \left\| \frac{9\sqrt{10}}{5} \nabla \theta^n - \frac{\sqrt{90}}{2} \nabla \theta^{n-1} \right\|^2 	\nonumber\\
			&  + \left\| \frac{\sqrt{90}}{2} \nabla \theta^{n+1} - \sqrt{90} \nabla \theta^n + \frac{\sqrt{90}}{2} \nabla \theta^{n-1} \right\|^2 + \frac{13}{2} \|\nabla(\theta^{n+1} - \theta^n)\|^2 - \frac{9}{2} \|\nabla(\theta^n - \theta^{n-1})\|^2\nonumber\\	
			&   + \frac{9}{2} \|\nabla(\theta^{n+1} - 2\theta^n + \theta^{n-1})\|^2.
		\end{align}
		Summing up (\ref{3.27})-(\ref{3.30}), after dropping some unnecessary terms we obtain
		\begin{align}\label{3.31}
			&\frac{1}{10}(\|\nabla\theta^{n+1}\|^2-\|\nabla\theta^{n}\|^2)+\|\frac{9\sqrt{10}}{5}\nabla\theta^{n+1}-\frac{\sqrt{90}}{2}\nabla\theta^{n}\|^2-\|\frac{9\sqrt{10}}{5}\nabla\theta^{n}-\frac{\sqrt{90}}{2}\nabla\theta^{n-1}\|^2\nonumber\\
			&  +\frac{13}{2}\|\nabla(\theta^{n+1}-\theta^n)\|^2-\frac{9}{2}\|\nabla(\theta^{n}-\theta^{n-1})\|^2+\frac{8}{5}\delta t\|\Delta(6\theta^{n+1}-5\theta^n)\|^2+\frac{2}{5}\delta t\|\Delta\theta^{n+1}\|^2\nonumber\\
			&  +\delta t(\|\Delta\theta^{n+1}\|^2-\|\Delta\theta^{n}\|^2+\|\Delta\theta^{n+1}-\Delta\theta^{n}\|^2)\nonumber\\
			\leq & { 2 c(\varepsilon) \delta t\|\nabla(6\bm{u}^{n}-5\bm{u}^{n-1})\|^8}+2\delta t \varepsilon\|\nabla(6\theta^{n}-5\theta^{n-1})\|^4+c(\varepsilon)\delta t\|b^{n+k}\|^2 \nonumber\\
			&\quad+2\delta t \varepsilon(\|\Delta(6\theta^{n+1}-5\theta^n)\|^2+\|\Delta(6\theta^{n} - 5\theta^{n-1})\|^2).
		\end{align}
		Taking the sum over \( n \) from \( 1 \) to \( m \) in (\ref{3.31}) and making \( \varepsilon = \frac{1}{100} \), we obtain
		{ 
		\begin{align}\label{3.32}
			& \|\nabla\theta^{m+1}\|^2+\delta t\sum_{n=0}^{m}\|\Delta\theta^{n+1}\|^2\nonumber\\
			 \leq& 2C\delta t \sum_{n=0}^{m}\big\|\nabla\big(6\bm{u}^{n}-5\bm{u}^{n-1}\big)\big\|^8
			+ 2C\delta t \sum_{n=0}^{m}(\big\|\nabla\big(6\theta^{n}-5\theta^{n-1}\big)\big\|^4
			+ c(\varepsilon)\|b^{n+k}\|^2)\nonumber \\
			 \leq& C\delta t\sum_{n=0}^{m}\big(\|\nabla\bm{u}^n\|^8+\|\nabla\theta^n\|^4\big) + CTC_b^2
		\end{align}}
		Combining (\ref{3.26})and(\ref{3.32}), we get that
		By using Lemma \ref{Lem2.3} we can get that
		{ 
		\begin{align}\label{3.33}
			&\|\nabla\bm{u}^{m+1}\|^2+\|\nabla\theta^{m+1}\|^2+\delta t\sum_{n=0}^{m}(\|\Delta\theta^{n}\|^2+\|\Delta\bm{u}^n\|^2)\nonumber\\
			\leq & C\sum_{n=0}^{m}((\|\nabla\bm{u}^n\|^2+\|\nabla\theta^{n}\|^2)^4+(\|\nabla\bm{u}^n\|^2+\|\nabla\theta^{n}\|^2)^3+(\|\nabla\theta^n\|^2+\|\nabla\bm{u}^n\|^2)^2\nonumber\\
			& +(\|\nabla\theta^n\|^2+\|\nabla\bm{u}^n\|^2))+CT(C_f^2+C_b^2).
		\end{align}}
	{ 
	We define \(\Psi = x^4 + x^3 + x^2+x \), where \(\Psi: (0, \infty) \to (0, \infty) \) is being a continuous and increasing function, and such that}
		\begin{align}\label{3.34}
			0 < T^* < \int_{C M^2 + C T C_f^2 + C T C_{\theta}^2}^{\infty} \frac{dz}{\Psi(z)},
		\end{align}
		then Lemma \ref{Lem2.5} implies that there exists $C_*>0$ independent of $\delta t>0$, such that
		{ 
		\begin{align}\label{3.35}
			&\|\nabla\bm{u}^{m+1}\|^2+\|\nabla\theta^{m+1}\|^2+\delta t\sum_{n=1}^{m+1}(\|\Delta\theta^{n}\|^2+\|\Delta\bm{u}^n\|^2)\leq C_*  \quad    \forall m\leq n\leq\frac{T^*}{\delta t}.
		\end{align}}
	For the bound of the pressure we can get from (\ref{3.18}) and $(\ref{3.19})$, thus, we complete the proof of this Theorem \ref{The3.1} when $d=2$. 
		\subsection*{Case 2: Rigorous proof of  stability for \(d = 3\).}
		For $d=3$ the proof proceeds in substantially the same manner as the proof for $d=2$. Owing to the invalidity of (\ref{2.3}), we are required to address (\ref{3.14}), (\ref{3.19}) and (\ref{3.28}). For simplicity, we only point out the handling method for (\ref{3.14}) and (\ref{3.19}). For the nonlinear term, by using (\ref{2.4}), we have that
		{ 
			\begin{align}
			&(((k+1)\bm{u}^n-k\bm{u}^{n-1})\cdot\nabla((k+1)\bm{u}^n-k\bm{u}^{n-1}),-\Delta((k+1)\bm{u}^{n+1}-k\bm{u}^n))\nonumber\\
			\leq & C\|(k+1)\bm{u}^n-k\bm{u}^{n-1}\|_1\|(k+1)\bm{u}^n-k\bm{u}^{n-1}\|_1^\frac{1}{2}\|(k+1)\bm{u}^n-k\bm{u}^{n-1}\|_2^\frac{1}{2}\|\Delta((k+1)\bm{u}^{n+1}-k\bm{u}^n)\|\nonumber\\
			\leq & C(\varepsilon)\|\nabla((k+1)\bm{u}^n-k\bm{u}^{n-1})\|^2\|\nabla((k+1)\bm{u}^n-k\bm{u}^{n-1})\|\|\Delta((k+1)\bm{u}^n-k\bm{u}^{n-1})\|\nonumber\\
			&  +\varepsilon\|\Delta((k+1)\bm{u}^{n+1}-k\bm{u}^n)\|^2\nonumber\\
			\leq &
			C(\varepsilon)(\|\nabla((k+1)\bm{u}^n-k\bm{u}^{n-1})\|^3\|\Delta((k+1)\bm{u}^n-k\bm{u}^{n-1})\|) +\varepsilon\|\Delta((k+1)\bm{u}^{n+1}-k\bm{u}^n)\|^2\nonumber\\
			\leq & C(\varepsilon)\|\nabla((k+1)\bm{u}^n-k\bm{u}^{n-1})\|^6
			+\varepsilon\|\Delta((k+1)\bm{u}^n-k\bm{u}^{n-1})\|^2+\varepsilon\|\Delta((k+1)\bm{u}^{n+1}-k\bm{u}^n)\|^2.
			\end{align}}
		For the pressure term, we can deduce that
		\begin{align}
			&\|(1+k)\bm{f}^n-k\bm{f}^{n-1}-Grg((k+1)\theta^n-k\theta^{n-1})-{ (k+1)\bm{u}^n\cdot\nabla\bm{u}^n-k\bm{u}^{n-1}\cdot\nabla\bm{u}^{n-1}}\|^2\nonumber\\
			\leq & N(\|(1+k)\bm{f}^n-k\bm{f}^{n-1}\|^2+\|Grg((k+1)\theta^n+k\theta^{n-1})\|^2\nonumber\\
			& +{ (k+1)^2\|\bm{u}^n\cdot\nabla\bm{u}^n\|^2+k^2\|\bm{u}^{n-1}\cdot\nabla\bm{u}^{n-1}\|^2})\nonumber\\
			\leq & N(\|(1+k)\bm{f}^n-k\bm{f}^{n-1}\|^2+\|Grg((k+1)\theta^n-k\theta^{n-1})\|^2)\nonumber\\
			&+C{ \|\nabla\bm{u}^n\|^3\|\Delta\bm{u}^n\|+C\|\nabla\bm{u}^{n-1}\|^3\|\Delta\bm{u}^{n-1}\|}\nonumber\\
			\leq & N(\|(1+k)\bm{f}^n-k\bm{f}^{n-1}\|^2+\|Grg((k+1)\theta^n-k\theta^{n-1})\|^2)  +C(\varepsilon)({ \|\nabla\bm{u}^n\|^6+\|\nabla\bm{u}^{n-1}\|^6})\nonumber\\
			&+\varepsilon\|\Delta\bm{u}^n\|^2+\varepsilon\|\Delta\bm{u}^{n-1}\|^2,
		\end{align}
		where we have used the following inequalities
		\begin{align}
			&\|\bm{u}^n\cdot\nabla\bm{u}^n\|^2\leq\|\bm{u}^n\|_{L_6}^2\|\nabla\bm{u}^n\|_{L_3}^2\leq C\|\nabla\bm{u}^n\|^3\|\Delta\bm{u}^n\|.
		\end{align}
		The rest of the estimates are the same as in the two-dimensional case.  To ensure stability, we impose the following conditions on the parameter \( k \),  
		\begin{align*}
			&\frac{1-k}{k}>4\varepsilon+\frac{1}{4}+\frac{\varepsilon}{2}+\frac{1}{2}  \quad\text{and} \quad \frac{1}{k}>4\varepsilon,
		\end{align*}
		which implies $k>4$. Now, we also fix $k=5$ , $\varepsilon=\frac{1}{50}$ combining other estimates, we get that
		{ \begin{align}
			&\|\nabla\bm{u}^{m+1}\|^2 + \delta t \sum_{n=0}^{m+1} \|\Delta\bm{u}^n\|^2 \label{eq:u-estimate} \nonumber\\
			\leq & C \sum_{n=0}^{m+1} \|\nabla\bm{u}^{n}\|^8 + C \sum_{n=0}^{m+1} \|\nabla\bm{u}^{n}\|^6 + C \sum_{n=0}^{m+1} \|\nabla\bm{u}^{n}\|^4 + C M^2 + T C C_f^2 + T C C_{\theta}^2.
		\end{align}}
	As similar method of (\ref{3.33})-(\ref{3.35})
		The stability estimates for \(\theta\) and $p$ follow the same line of reasoning as those for the velocity \(\bm{u}\). With this, the proof of Theorem \ref{The3.1} is completed, when $d=3$. Notably, the stability results established for the three-dimensional scenario are restricted to a local time \(T^*\), which satisfies the condition given in (\ref{3.34}). Having addressed all cases both two-dimensional and three-dimensional, the proof of Theorem 3.2 is thus fully completed.  
	\end{proof}
	\section{Error Estimates}
	\setcounter{equation}{0}
	The error estimate associated with the new second-order consistent splitting scheme is presented in this section, where the scheme is designed for solving the natural convection equations.
	\begin{theorem}\label{The4.1}
		{Let \( d = 2, 3 \) and \( T > 0 \). Let \( \bm{u}(t^n) \), \( p(t^n) \) and \( \theta(t^n) \) be the solutions of (\ref{1.1}), and suppose that \( \bm{u}^n \), \( p^n \), and \( \theta^n \) are computed via the second-order scheme (\ref{3.5})–(\ref{3.7}). To simplify the analysis, we set \( k = 5 \), \( \nu = 1 \), and \( \lambda = 1 \). We need the following regularities of the exact solutions:
			\begin{align}
				\frac{\partial^2\bm{u}}{\partial t^3}\in\mathit{L}^2(0,\mathit{T};\mathit{H^2}), \frac{\partial^3\bm{u}}{\partial t^2}\in\mathit{L}^2(0,\mathit{T};\mathit{L^2}), 
				\frac{\partial^2\theta}{\partial t^2}\in\mathit{L}^2(0,\mathit{T};\mathit{H^2}), \frac{\partial^3\theta}{\partial t^3}\in\mathit{L}^2(0,\mathit{T};\mathit{L^2}), \frac{\partial^2p}{\partial t^2}\in\mathit{L}^2(0,\mathit{T};\mathit{H^1}).\nonumber
			\end{align}
			Then for $m\leq N$, we have
			\begin{align}
				\|\nabla\bm{e}_u^{m+1}\|^2+\|\nabla\eta^{m+1}\|^2+\delta t\sum_{n=0}^{m+1}(\|\Delta\bm{e}_u^n\|^2+ \|\Delta\eta^n\|^2+\|\nabla e_p^n\|^2)\leq C\delta t^4,\nonumber
			\end{align}
			where the constant $C$ is  independent of $\delta t$, but is dependent on  $T$, $\Omega$ and the exact solutions of $\bm{u}$ and $\theta$.
		}
	\end{theorem}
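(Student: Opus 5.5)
We mirror the stability proof of Theorem \ref{The3.1}, applied to the error equations. Set $\bm{e}_u^n=\bm{u}(t^n)-\bm{u}^n$, $\eta^n=\theta(t^n)-\theta^n$ and $e_p^n=p(t^n)-p^n$. The exact solution satisfies (\ref{3.5})--(\ref{3.7}) up to truncation residuals $\bm{R}_u^{n+k},R_\theta^{n+k},R_p^{n}$. These residuals come from the Taylor identities (3.1)--(3.2) and from the pressure extrapolation $(k+1)p(t^n)-kp(t^{n-1})=p(t^{n+k})+O(\delta t^2)$. Writing each remainder in integral form, the assumed regularity gives
\begin{align*}
\delta t\sum_n\big(\|\bm{R}_u^{n+k}\|^2+\|R_\theta^{n+k}\|^2+\|\nabla R_p^n\|^2\big)\le C\delta t^4 .
\end{align*}
The same bound holds for the extrapolation errors of $\bm{f}$, $\theta$ and $\bm{u}$ inside the nonlinear terms. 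Subtracting the scheme from the exact equations yields error equations with the same structure as (\ref{3.5})--(\ref{3.7}), now forced by these residuals. The initial errors at $n=0,1$ are controlled by the first-order start-up estimate derived from \cite{Z2016}.

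For the velocity error, test with $\bm{v}=-\Delta(6\bm{e}_u^{n+1}-5\bm{e}_u^n)$, and for the temperature error, test with $\bm{w}=-\Delta(6\eta^{n+1}-5\eta^n)$. The time-derivative terms telescope exactly as in (\ref{3.24}) and (\ref{3.30}), and the viscous terms split as in (\ref{3.13}). This produces the coercive quantities $\frac{8}{5}\|\Delta(6\bm{e}_u^{n+1}-5\bm{e}_u^n)\|^2+\frac{2}{5}\|\Delta\bm{e}_u^{n+1}\|^2$, and the analogous ones for $\eta$.

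The pressure error is handled through the error version of (\ref{3.7}). We write $\nabla e_p$ via the Stokes pressure $p_s(\cdot)$ applied to $6\bm{e}_u^n-5\bm{e}_u^{n-1}$, plus errors in buoyancy, forcing and convection. Lemma \ref{Lem2.2} then bounds this term by $(\frac14+\frac{\varepsilon}{2})\|\Delta(6\bm{e}_u^n-5\bm{e}_u^{n-1})\|^2+C\|\nabla(6\bm{e}_u^n-5\bm{e}_u^{n-1})\|^2$ plus lower-order terms. The condition $k>4$ (here $k=5$) is exactly what lets the coercive term $\frac{k-1}{k}\|\Delta(\cdot)\|^2$ absorb this contribution after an index shift, in the same way as in the stability proof.

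The nonlinear terms are the main obstacle. We split each difference as
\begin{align*}
b(\bm{U},\bm{U},\cdot)-b(\bm{U}_h,\bm{U}_h,\cdot)=b(\bm{E},\bm{U},\cdot)+b(\bm{U}_h,\bm{E},\cdot),
\end{align*}
where $\bm{U}=6\bm{u}(t^n)-5\bm{u}(t^{n-1})$, $\bm{U}_h$ is its discrete counterpart, and $\bm{E}$ is the extrapolated error; the temperature convection term is split the same way. For the first piece we use (\ref{2.5})--(\ref{2.6}) with the $H^2$ regularity of the exact solution, giving $C\|\nabla\bm{E}\|^2+\varepsilon\|\Delta(\cdot)\|^2$. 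For the second piece we use (\ref{2.3}) when $d=2$ and (\ref{2.4}) when $d=3$, together with the uniform bound $\|\nabla\bm{u}^n\|\le C_*$ from Theorem \ref{The3.1}. By Young's inequality this gives $C(C_*)\|\nabla\bm{E}\|^2(1+\|\Delta\bm{u}^n\|^2)+\varepsilon\|\Delta\bm{E}\|^2+\varepsilon\|\Delta(\cdot)\|^2$. The weights $\|\Delta\bm{u}^n\|^2$ are summable, since $\delta t\sum\|\Delta\bm{u}^n\|^2\le C_*$, so they are admissible as the coefficients $b^n$ in Lemma \ref{Lem2.3}. In 3D this works only on $[0,T^*]$, the interval on which Theorem \ref{The3.1} holds.

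After choosing $\varepsilon$ small (e.g.\ $1/100$) and summing over $n$ from $1$ to $m$, we obtain
\begin{align*}
\|\nabla\bm{e}_u^{m+1}\|^2+\|\nabla\eta^{m+1}\|^2+\delta t\sum_{n}\big(\|\Delta\bm{e}_u^n\|^2+\|\Delta\eta^n\|^2\big)\le C\delta t\sum_n b^n\big(\|\nabla\bm{e}_u^n\|^2+\|\nabla\eta^n\|^2\big)+C\delta t^4 .
\end{align*}
Lemma \ref{Lem2.3} then closes the estimate, and the pressure bound $\delta t\sum\|\nabla e_p^n\|^2\le C\delta t^4$ follows from the error version of (\ref{3.18})--(\ref{3.19}).

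One delicate point remains: the start-up step is only first-order accurate, with $\|\nabla\bm{e}_u^1\|^2\le C$ from \cite{Z2016}. To reach $\delta t^4$ we would need a sharper start-up estimate, such as $\|\nabla\bm{e}_u^1\|^2+\|\nabla\eta^1\|^2\le C\delta t^4$. We expect to obtain this from a local-truncation argument for the single step (\ref{3.8})--(\ref{3.9}), whose one-step error is $O(\delta t^2)$. If that fails, the bound must be stated with the start-up contribution added separately.
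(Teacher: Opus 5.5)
Your proposal follows the paper's proof step by step. It uses the same error equations with integral-form Taylor residuals, the same test functions $-\Delta(6\bm e_u^{n+1}-5\bm e_u^n)$ and $-\Delta(6\eta^{n+1}-5\eta^n)$, and the same telescoping identity. The Stokes-pressure commutator of Lemma \ref{Lem2.2} is absorbed through the margin $\frac45>\frac34$, which is where $k>4$ enters, and the argument closes with Lemma \ref{Lem2.3}, whose weights are summable by Theorem \ref{The3.1}. The only variation is minor. You split the convection difference as $b(\bm E,\bm U,\cdot)+b(\bm U_h,\bm E,\cdot)$ and treat the second piece with (\ref{2.3})/(\ref{2.4}) and $\|\nabla\bm u^n\|\le C_*$. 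The paper instead uses $b(\bm E,\bm U_h,\cdot)+b(\bm U,\bm E,\cdot)$ with (\ref{2.5})--(\ref{2.6}) and the summable weight $\|6\bm u^n-5\bm u^{n-1}\|_2^2$. Both close the same way.

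The start-up point you raise is a genuine gap, and the paper does not close it either. When (\ref{4.31}) is summed from $n=1$ to $m$ to obtain (\ref{4.32}), the telescoping leaves level-one terms on the right. With $\bm e_u^0=0$ these are a multiple of $\|\nabla\bm e_u^1\|^2$ plus $C\delta t\|\Delta\bm e_u^1\|^2$; the latter also enters through the index-shifted pressure and nonlinear terms at $n=1$. Their $\eta$ analogues appear as well. The paper simply drops all of these, although the start-up bound quoted in Section 3 only gives $\|\nabla\bm e_u^1\|^2\le C$ and $\delta t\|\Delta\bm e_u^1\|^2\le C$.

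The local-truncation argument you hope for is also not automatic for (\ref{3.8})--(\ref{3.9}). The projected velocity satisfies only $\bm u^1\cdot\bm n=0$, so in general $\bm e_u^1\notin\bm H_0^1$. The integration by parts behind the telescoping identity therefore picks up boundary terms in the first steps. Moreover, the pressure mismatch $\nabla(p(t^1)-\Phi^0)$ in (\ref{3.8}) does not vanish on $\partial\Omega$. This creates a boundary layer of width $O(\sqrt{\delta t})$ and typically costs a fractional power of $\delta t$ in $\|\nabla\bm e_u^1\|$.

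The defensible version is therefore your fallback. Either assume, or construct, a start-up with $\|\nabla\bm e_u^1\|^2+\|\nabla\eta^1\|^2+\delta t(\|\Delta\bm e_u^1\|^2+\|\Delta\eta^1\|^2)\le C\delta t^4$, or carry these quantities on the right-hand side of the final bound.

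Your restriction to $[0,T^*]$ is also correct. Since Theorem \ref{The3.1} is proved through Lemma \ref{Lem2.5} in both dimensions, the restriction applies in 2D too, even though the statement claims the bound for all $m\le N$.
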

	\begin{proof}
		Before discussing the error of the scheme, we first derive the error equations. To simplify the analysis, we set \( k = 5 \), \( \nu = 1 \), and \( \lambda = 1 \), and the scheme can thus be rewritten in the following form.
		\begin{equation}\label{4.1}
			\left\{
			\begin{aligned}
				&\frac{1}{2\delta t}\left( 11\bm{u}^{n+1} - 20\bm{u}^{n} + 9\bm{u}^{n-1}, \bm{v} \right) + a\left(5\bm{u}^{n+1} - 4\bm{u}^n, \bm{v}\right) + b\left(6\bm{u}^n - 5\bm{u}^{n-1},{  6\bm{u}^n - 5\bm{u}^{n-1}}, \bm{v}\right)\\
				& - d\left(\bm{v}, 6p^n - 5p^{n-1}\right)= -Gr g \left( 6\theta^n - 5\theta^{n-1}, \bm{v} \right) + \left( \bm{f}^{n+5}, \bm{v} \right), \\
				&\frac{1}{2\delta t}\left( 11\theta^{n+1} - 20\theta^{n-1} + 9\theta^n, \bm{w} \right) +  a\left(5\theta^{n+1} - 4\theta^n, \bm{w}\right)+ b\left(6\bm{u}^n - 5\bm{u}^{n-1}, { 6\theta^{n} - 5\theta^{n-1}}, \bm{w}\right)=\left(b^{n+5}, \bm{w} \right), \\
				&\left(\nabla p^{n+1}, \nabla q\right) = \left( \bm{f}^{n+1} - Gr g  \theta^{n+1} - \bm{u}^{n+1} \cdot \nabla \bm{u}^{n+1} - \nu \nabla \times \nabla \times \bm{u}^{n+1}, \nabla q \right).
			\end{aligned}
			\right.
		\end{equation}
		We denote the errors as follows: \( \bm{e}_{\bm{u}}^n = \bm{u}^n - \bm{u}(\cdot, t^n) \) (velocity error), \( \eta = \theta^n - \theta(\cdot, t^n) \) (temperature error), and \( e_p^n = p^n - p(\cdot, t^n) \) (pressure error).  
		From (\ref{4.1}) and (\ref{1.1}), we can derive the error equations for \( \bm{u}^{n+1} \):   
		\begin{align}\label{4.2}
			&(11\bm{e}_u^{n+1} - 20\bm{e}_u^{n} + 9\bm{e}_u^{n-1}, \bm{v}) 
			+ 2\delta t\left[ a(5\bm{e}_u^{n+1} - 4\bm{e}_u^{n}, \bm{v}) + b(6\bm{u}^{n} - 5\bm{u}^{n-1}, { 6\bm{u}^{n} - 5\bm{u}^{n-1}}, \bm{v}) \right. \nonumber \\
			&  - b(6\bm{u}(t^{n}) - 5\bm{u}(t^{n-1}), { 6\bm{u}(t^{n})-5\bm{u}(t^{n-1})},\bm{v})+ Grg(6\eta^n - 5\eta^{n-1}, \bm{v}) + d(\bm{v}, 6e_p^n - 5e_p^{n-1})\left. \right] \nonumber \\
			= &( R^n + 2\delta t(Q^n + S^n + H^n + B^n),\bm{v}),
		\end{align}
		where $R^n,Q^n,S^n,H^n,B^n$ are given by
		\begin{align}
			&R^n=-11\bm{u}(t^{n+1})+20\bm{u}(t^{n})-9\bm{u}(t^{n-1})+2\delta t\bm{u}_t(t^{n+5}),\\
			&Q^n=-\Delta\bm{u}(t^{n+5})+\Delta(5\bm{u}(t^{n+1})-4\bm{u}(t^{n})),\\
			&{ S^n=\bm{u}(t^{n+5})\cdot\nabla\bm{u}(t^{n+5})-(6\bm{u}(t^{n})-5\bm{u}(t^{n-1}))\cdot\nabla(6\bm{u}(t^{n})-5\bm{u}(t^{n-1}))},\label{4.5}\\
			&H^n=\theta(t^{n+5})-(6\theta(t^{n})-5\theta(t^{n-1})),\\
			&B^n=\nabla p(t^{n+5})-\nabla(6p(t^{n})-5p(t^{n-1})).
		\end{align}
		Now, we can get the error equation for $\theta^{n+1}$ as follows 
		\begin{align}
			&\left(11\eta^{n+1} - 20\eta^{n} + 9\eta^{n-1}, \bm{w}\right) 
			+ 2\delta t \left[ a\left(5\eta^{n+1} - 4\eta^{n}, \bm{w}\right) \right. + b\left(6\bm{u}^n - 5\bm{u}^{n-1}, 6\theta^{n} - 5\theta^{n-1}, \bm{w}\right)\nonumber\\
			&  \left. - b\left(6\bm{u}(t^n) - 5\bm{u}(t^{n-1}), {  6\theta(t^{n}) - 5\theta(t^{n-1})}, \bm{w}\right) \right] = (T^n + 2\delta t \left(W^n + E^n\right),\bm{w}), \label{4.8}
		\end{align}
		where, $T^n,W^n,E^n$ are given by
		
		\begin{align}
			&T^n=-11\theta(t^{n+1})+20\theta(t^{n})-9\theta(t^{n-1})+2\delta t\theta_t(t^{n+5}),\\
			&W^n=-\Delta\theta(t^{n+5})+\Delta(5\theta(t^{n+1})-4\theta(t^{n})),\label{4.10}\\
			&{ E^n=\bm{u}(t^{n+5})\cdot\nabla\theta(t^{n+5})-(6\bm{u}(t^{n})-5\bm{u}(t^{n-1}))\cdot\nabla(6\theta(t^{n})-5\theta(t^{n-1})).}
		\end{align}
		By using the Stokes pressure and (\ref{4.1}), we can obtain the pressure term error equation as:
		\begin{align}\label{4.12}
			&(\nabla e_p^n,\nabla q)=(-Grg\eta^n+\bm{u}(t^{n})\cdot\nabla\bm{u}(t^{n})-\bm{u}^{n}\cdot\nabla\bm{u}^{n},\nabla q)+(\nabla p_s(\bm{e}_u^n),\nabla q).
		\end{align}
		Furthermore, we have that
		\begin{align}\label{4.13}
			& (\nabla (6\bm{e}_p^n-5\bm{e}_p^{n-1}),\nabla q)\\=&(6\bm{u}(t^n)\cdot\nabla\bm{u}(t^n)-6\bm{u}^n\cdot\nabla\bm{u}^n-5\bm{u}(t^{n-1})\cdot\nabla\bm{u}(t^{n-1})+5\bm{u}^{n-1}\cdot\nabla\bm{u}^{n-1},\nabla q)\nonumber\\
			&+(5Grg\eta^{n-1}-6Grg\eta^n,\nabla q)+(\nabla p_s(6\bm{e}_u^n-5\bm{e}_u^{n-1}),\nabla q).\nonumber
		\end{align}
		We then begin to prove Theorem \ref{The4.1}. We set \( \bm{v} = -\Delta(6\bm{e}_{\bm{u}}^{n+1} - 5\bm{e}_{\bm{u}}^n) \) in (\ref{4.2}). For the first two terms, we have
		\begin{align}\label{4.14}
			&(11\bm{e}_u^{n+1} - 20\bm{e}_u^n + 9\bm{e}_u^{n-1}, 
			-\Delta(6\bm{e}_u^{n+1} - 5\bm{e}_u^n))\\
			= &\frac{1}{10} (\|\nabla \bm{e}_u^{n+1}\|^2-\|\nabla \bm{e}_u^{n}\|^2 )
			+ \|\frac{9\sqrt{10}}{5} \nabla \bm{e}_u^{n+1}-\frac{\sqrt{90}}{2} \nabla \bm{e}_u^n\|^2 
			- \|\frac{9\sqrt{10}}{5} \nabla \bm{e}_u^{n}-\frac{\sqrt{90}}{2} \nabla \bm{e}_u^{n-1}\|^2  \nonumber\\
			&+\|\frac{\sqrt{90}}{2} \nabla \bm{e}_u^{n+1}-\sqrt{90} \nabla \bm{e}_u^n+\frac{\sqrt{90}}{2} \nabla \bm{e}_u^{n-1}\|^2+\frac{13}{2}\|\nabla(\bm{e}_u^{n+1}-\bm{e}_u^n)\|^2- \frac{9}{2} \|\nabla (\bm{e}_u^n - \bm{e}_u^{n-1})\|^2 \nonumber\\
			&+ \frac{9}{2} \|\nabla (\bm{e}_u^{n+1} - 2\bm{e}_u^n + \bm{e}_u^{n-1})\|^2,
			\nonumber
		\end{align}
		\begin{align}
			&2\delta t \, a\left(5\bm{e}_u^{n+1} - 4\bm{e}_u^n, -\Delta\left(6\bm{e}_u^{n+1} - 5\bm{e}_u^n\right)\right)  \\
			= & \frac{8\delta t}{5}\left\|\Delta\left(6\bm{e}_u^{n+1} - 5\bm{e}_u^n\right)\right\|^2 
			+ \frac{2\delta t}{5}\left\|\Delta\bm{e}_u^{n+1}\right\|^2  + \delta t\left(\left\|\Delta\bm{e}_u^{n+1}\right\|^2 
			- \left\|\Delta\bm{e}_u^{n}\right\|^2 
			+ \left\|\Delta\bm{e}_u^{n+1} - \Delta\bm{e}_u^{n}\right\|^2\right).\nonumber
		\end{align}
		For the nonlinear terms, we use the technique of adding and subtracting a term to split the nonlinearity.
		{ 
		\begin{align}
			&b\left(6\bm{u}^{n} - 5\bm{u}^{n-1}, 6\bm{u}^{n} - 5\bm{u}^{n-1}, \bm{v}\right)- b\left(6\bm{u}(t^{n}) - 5\bm{u}(t^{n-1}), 6\bm{u}(t^{n}) - 5\bm{u}(t^{n-1}), \bm{v}\right)\nonumber\\
			= & b\left(6\bm{e}_u^n - 5\bm{e}_u^{n-1}, \, 6\bm{u}^{n} - 5\bm{u}^{n-1}, \, \bm{v}\right)
			+ b\left(6\bm{u}(t^n) - 5\bm{u}(t^{n-1}), \, 6\bm{e}_u^{n} - 5\bm{e}_u^{n-1}, \, \bm{v}\right).
		\end{align}}
		Then, by using ($\ref{2.5}$) and ($\ref{2.6}$), we can get that
		{ 
		\begin{align}
			&b\left(6\bm{e}_u^n - 5\bm{e}_u^{n-1}, 6\bm{u}^{n} - 5\bm{u}^{n-1}, -\Delta\left(6\bm{e}_u^{n+1} - 5\bm{e}_u^n\right)\right)\\
			&  + b\left(6\bm{u}(t^n) - 5\bm{u}(t^{n-1}), 6\bm{e}_u^{n} - 5\bm{e}_u^{n-1},-\Delta\left(6\bm{e}_u^{n+1} - 5\bm{e}_u^n\right)\right)\notag\\
			\leq & c\bigl\|6\bm{e}_u^n - 5\bm{e}_u^{n-1}\bigr\|_1 
			\bigl\|6\bm{u}^{n} - 5\bm{u}^{n-1}\bigr\|_2 
			\bigl\|\Delta\left(6\bm{e}_u^{n+1} - 5\bm{e}_u^n\right)\bigr\| \notag \\
			&   + c\bigl\|6\bm{u}(t^n) - 5\bm{u}(t^{n-1})\bigr\|_2 
			\bigl\|6\bm{e}_u^{n} - 5\bm{e}_u^{n-1}\bigr\|_1 
			\bigl\|\Delta\left(6\bm{e}_u^{n+1} - 5\bm{e}_u^n\right)\bigr\| \notag \\
			\leq &   c(\varepsilon)\bigl\|\nabla\left(6\bm{e}_u^n - 5\bm{e}_u^{n-1}\right)\bigr\|^2 
			\bigl\|6\bm{u}^{n} - 5\bm{u}^{n-1}\bigr\|_2^2 
			+ \varepsilon\bigl\|\Delta\left(6\bm{e}_u^{n+1} - 5\bm{e}_u^n\right)\bigr\|^2\nonumber\\
			&  +c(\varepsilon)\bigl\|6\bm{u}(t^n) - 5\bm{u}(t^{n-1})\bigr\|_2^2 
			\bigl\|\nabla\left(6\bm{e}_u^{n} - 5\bm{e}_u^{n-1}\right)\bigr\|^2.\nonumber
		\end{align}}
		For the temperature term, by applying Young's inequality, we have
		\begin{align}\label{4.18}
			&(Grg(6\eta^n-5\eta^{n-1}),-\Delta( 6\bm{e}_u^{n+1} - 5\bm{e}_u^n))\nonumber\\
			\leq & c(\varepsilon)\|Grg(6\eta^n-5\eta^{n-1})\|^2+\varepsilon\|\Delta(6 \bm{e}_u^{n+1} - 5\bm{e}_u^n)\|^2.
		\end{align}
		For the pressure terms, using H\"{o}lder's inequality, we get that
		\begin{align}\label{4.19}
			&d(-\Delta( 6\bm{e}_u^{n+1} - 5\bm{e}_u^n),6e^n_p-5e_p^{n-1})\leq\|\nabla(6e^n_p-5e_p^{n-1})\|\|\Delta( 6\bm{e}_u^{n+1} - 5\bm{e}_u^n)\|.
		\end{align}
		We let $q=6e_p^n-5e_p^{n-1}$ in (\ref{4.13}), we can get that
		\begin{align}\label{4.20}
			\|\nabla(6e_p^n-5e_p^{n-1})\|&\leq\|6\eta^n\|+\|5\eta^{n-1}\|+\|\nabla p_s(6\bm{e}_u^n-5\bm{e}_u^{n-1})\|\\
			&\quad+\|6\bm{u}(t^n)\cdot\nabla\bm{u}(t^n)-6\bm{u}^n\cdot\nabla\bm{u}^n\|+{ \|-5\bm{u}^{n-1}\cdot\nabla\bm{u}^{n-1}+5\bm{u}(t^{n-1})\cdot\nabla\bm{u}(t^{n-1})\|}.\nonumber
		\end{align}
		For the right-hand first two term, we  transform it to obtain
		\begin{align}
			6\bm{u}(t^n)\cdot\nabla\bm{u}(t^n)-6\bm{u}^n\cdot\nabla\bm{u}^n
			=-6\bm{e}_u^n\cdot\nabla\bm{u}^n+6\bm{u}(t^n)\cdot\nabla\bm{e}_u^n.
		\end{align}
		Similarly,
		{ 
		\begin{align}
			-5\bm{u}^{n-1}\cdot\nabla\bm{u}^{n-1}+5\bm{u}(t^{n-1})\cdot\nabla\bm{u}(t^{n-1})=5\bm{u}(t^{n-1})\nabla\bm{e}_u^{n-1}-5\bm{e}_u^{n-1}\cdot\nabla\bm{u}^{n-1}.
		\end{align}}
		Then, by using the Sobolev inequality, we get that
		\begin{align}\label{4.23}
			&\|6\bm{u}(t^n)\cdot\nabla\bm{u}(t^n)-6\bm{u}^n\cdot\nabla\bm{u}^n\|^2\\
			\leq&\|6\bm{e}_u^n\cdot\nabla\bm{u}^n\|^2+\|6\bm{u}(t^n)\cdot\nabla\bm{e}_u^n\|^2\leq c\|\Delta\bm{u}^n\|^2\|\nabla\bm{e}_u^n\|^2+c\|\nabla\bm{e}_u^n\|^2\|\Delta\bm{u}(t^n)\|^2,\nonumber
		\end{align}
		where we have used the folllowing Sobolev embedding inequalities
		\begin{align*}
			\|\bm{e}_u^n\cdot\nabla\bm{u}^n\|^2    
			&\leq \|\bm{e}_u^n\|_{L^4}^2 
			\|\nabla\bm{u}^n\|_{L^4}^2
			\leq c\|\nabla\bm{e}_u^n\|^2\|\Delta\bm{u}^n\|^2   \quad d=2,3,\nonumber
		\end{align*}
		\begin{align*}
			&\|\bm{u}(t^n)\cdot\nabla\bm{e}_u^n\|^2\leq\|\bm{u}(t^n)\|_{L^\infty}^2\|\nabla\bm{e}_u^n\|^2 
			\leq\|\Delta\bm{u}(t^n)\|^2\|\nabla\bm{e}_u^n\|^2  \quad d=2,3,
		\end{align*}
		and 
		{ 
		\begin{align*}
			\|-5\bm{u}^{n-1}\cdot\nabla\bm{u}^{n-1}+5\bm{u}(t^{n-1})\cdot\nabla\bm{u}(t^{n-1})\|^2\leq c\|\Delta\bm{u}(t^{n-1})\|^2\|\nabla\bm{e}_u^{n-1}\|^2+ c\|\nabla\bm{e}_u^{n-1}\|^2\|\Delta\bm{u}^{n-1}\|^2.
		\end{align*}}
		Now, by combining (\ref{4.19})–(\ref{4.23}) and making use of Lemma \ref{Lem2.2}, we obtain
		\begin{align}
			&d(-\Delta( 6\bm{e}_u^{n+1} - 5\bm{e}_u^n),6e^n_p-5e_p^{n-1})\\
			\leq &\|\Delta(6\bm{e}_u^{n+1}-5\bm{e}_u^n)\|(\|6\bm{u}(t^n)\cdot\nabla\bm{u}(t^n)-6\bm{u}^n\cdot\nabla\bm{u}^n\|+{ \|5\bm{u}(t^{n-1})\cdot\nabla\bm{u}(t^{n-1})-5\bm{u}^{n-1}\cdot\nabla\bm{u}^{n-1}\|})\nonumber\\
			&  +\|\Delta(6\bm{e}_u^{n+1}-5\bm{e}_u^n)\|(\|6\eta^n\|+\|5\eta^{n-1}\|)+\|\Delta(6\bm{e}_u^{n+1}-5\bm{e}_u^n)\|\|\nabla p_s(6\bm{e}_u^n-5\bm{e}_u^{n-1})\|\nonumber\\
			\leq & c(\varepsilon)(\|6\bm{u}(t^n)\cdot\nabla\bm{u}(t^n)-6\bm{u}^n\cdot\nabla\bm{u}^n\|^2+{ \|5\bm{u}(t^{n-1})\cdot\nabla\bm{u}(t^{n-1})-5\bm{u}^{n-1}\cdot\nabla\bm{u}^{n-1}\|^2})\nonumber\\
			&  +c(\varepsilon)(\|6\eta^n\|^2+\|5\eta^{n-1}\|^2)+\varepsilon\|\Delta(6\bm{e}_u^{n+1}-5\bm{e}_u^n)\|^2+\frac{1}{2}\|\nabla p_s(6\bm{e}_u^n-5\bm{e}_u^{n-1})\|^2+\frac{1}{2}\|\Delta(6\bm{e}_u^{n+1}-5\bm{e}_u^n)\|^2\nonumber\\
			\leq & c(\varepsilon)\|\nabla\bm{e}_u^n\|^2(\|\Delta\bm{u}^n\|^2+\|\bm{u}(t^n)\|_2^2)+c(\varepsilon){ \|\nabla\bm{e}_u^{n-1}\|^2}(\|\Delta\bm{u}^{n-1}\|^2+\|\bm{u}(t^{n-1})\|_2^2)+c(\varepsilon)(\|6\eta^n\|^2+\|5\eta^{n-1}\|^2) \nonumber\\
			&  +(\varepsilon+\frac{1}{2})\|\Delta(6\bm{e}_u^{n+1}-5\bm{e}_u^n)\|^2+(\frac{1}{4}+\varepsilon)\|\Delta(6\bm{e}_u^{n}-5\bm{e}_u^{n-1})\|^2+C(\varepsilon)\|\nabla(6\bm{e}_u^n-5\bm{e}_u^{n-1})\|^2.\nonumber
		\end{align}
			{ 
	We now estimate the terms on the right-hand side. Using Taylor expansions, Young's inequality and the Cauchy–Schwarz inequatility, we deduce that 
	\begin{align} \label{4.25}
		&(B^n,-\Delta(6\bm{e}_u^{n+1}-5\bm{e}_u^n))\\
		= &(6\int^{t^{n+5}}_{t^n}(t^n-s)\nabla\frac{\partial^2p}{\partial t^2}(s)ds-5\int^{t^{n+5}}_{t^{n-1}}(t^{n-1}-s)\nabla\frac{\partial^2p}{\partial t^2}(s)ds,-\Delta(6\bm{e}_u^{n+1}-5\bm{e}_u^n))\nonumber\\
		\leq&c(\varepsilon)\|6\int^{t^{n+5}}_{t^n}(t^n-s)\nabla\frac{\partial^2p}{\partial t^2}(s)ds-5\int^{t^{n+5}}_{t^{n-1}}(t^{n-1}-s)\nabla\frac{\partial^2p}{\partial t^2}(s)ds\|^2+\varepsilon \|\Delta(6\bm{e}_u^{n+1}-5\bm{e}_u^n)\|^2\nonumber\\
		\leq&c(\varepsilon)\mid\int^{t^{n+5}}_{t^{n}}(t^n-s)^2ds+\int^{t^{n+5}}_{t^{n-1}}(t^{n-1}-s)^2ds\mid\int^{t^{n+5}}_{t^{n-1}}\|\nabla\frac{\partial^2p}{\partial t^2}(s)\|^2+\varepsilon \|\Delta(6\bm{e}_u^{n+1}-5\bm{e}_u^n)\|^2\nonumber\\
		\leq & c(\varepsilon)\delta t^3\int^{t^{n+5}}_{t^{n-1}}\|\nabla\frac{\partial^2p}{\partial t^2}(s)\|^2ds+\varepsilon \|\Delta(6\bm{e}_u^{n+1}-5\bm{e}_u^n)\|^2,\nonumber
	\end{align}}
		\begin{align}
			&(Q^n,-\Delta(6\bm{e}_u^{n+1}-5\bm{e}_u^n))\\
			= &(
			-5\int^{t^{n+5}}_{t^{n+1}}(t^{n+1}-s)\Delta\frac{\partial^2\bm{u}}{\partial t^2}(s)ds+4\int^{t^{n+5}}_{t^{n}}(t^{n+1}-s)\Delta\frac{\partial^2\bm{u}}{\partial t^2}(s)ds,-\Delta(6\bm{e}_u^{n+1}-5\bm{e}_u^n))\nonumber\\
			\leq & c(\varepsilon)\delta t^3\int^{t^{n+5}}_{t^{n}}\|\Delta\frac{\partial^2\bm{u}}{\partial t^2}(s)\|^2ds+\varepsilon\|\Delta(6\bm{e}_u^{n+1}-5\bm{e}_u^n)\|^2,
		\end{align}
		\begin{align}
			&(R^n,-\Delta(6\bm{e}_u^{n+1}-5\bm{e}_u^n))\nonumber\\
			\leq &\frac{c(\varepsilon)}{\delta t}\|\frac{11}{2}\int^{t^{n+5}}_{t^{n+1}}(t^{n+1}-s)^2\frac{\partial^3\bm{u}}{\partial t^3}(s)ds-10\int^{t^{n+5}}_{t^{n}}(t^{n}-s)^2\frac{\partial^3\bm{u}}{\partial t^3}(s)ds\nonumber\\
			&+\frac{9}{2}\int^{t^{n+5}}_{t^{n-1}}(t^{n-1}-s)^2\frac{\partial^3\bm{u}}{\partial t^3}(s)ds\|^2  +\varepsilon\delta t\|\Delta(6\bm{e}_u^{n+1}-5\bm{e}_u^n)\|^2\nonumber\\
			\leq & c(\varepsilon)\delta t^4\int^{t^{n+5}}_{t^{n-1}}\|\frac{\partial^3\bm{u}}{\partial t^3}(s)\|^2ds+\varepsilon\delta t\|\Delta(6\bm{e}_u^{n+1}-5\bm{e}_u^n)\|^2,
		\end{align}
		\begin{align}
			&(H^n,-\Delta(6\bm{e}_u^{n+1}-5\bm{e}_u^n)) \\
			= &(6\int^{t^{n+5}}_{t^{n}}(t^{n}-s)\frac{\partial^2\theta}{\partial t^2}(s)ds-5\int^{t^{n+5}}_{t^{n-1}}(t^{n-1}-s)\frac{\partial^2\theta}{\partial t^2}(s)ds,\Delta(6\bm{e}_u^{n+1}-5\bm{e}_u^n))\nonumber\\
			\leq & c(\varepsilon)\delta t^3\int^{t^{n+5}}_{t^{n-1}}\|\frac{\partial^2\theta}{\partial t^2}(s)\|^2ds+\varepsilon\|\Delta(6\bm{e}_u^{n+1}-5\bm{e}_u^n)\|^2.\nonumber
		\end{align}
		By using the (\ref{2.5}) and (\ref{4.5}) , for the term $S^n$ that
		\begin{align}\label{4.30}
			&(S^n,-\Delta(6\bm{e}_u^{n+1}-5\bm{e}_u^n))\\
			=&({ \bm{u}(t^{n+5})\cdot\nabla(\bm{u}(t^{n+5})-(6\bm{u}(t^{n})-5\bm{u}(t^{n-1}))},-\Delta(6\bm{e}_u^{n+1}-5\bm{e}_u^n))\nonumber\\
			& -({ 6\bm{u}(t^{n})-5\bm{u}(t^{n-1})-\bm{u}(t^{n+5}))\cdot\nabla(6\bm{u}(t^{n})-5\bm{u}(t^{n-1}))},-\Delta(6\bm{e}_u^{n+1}-5\bm{e}_u^n))\nonumber\\
			\leq& C\|\bm{u}(t^{n+5})\|_2{ \|\nabla(\bm{u}(t^{n+5})-6\bm{u}(t^{n})+5\bm{u}(t^{n-1}))\|}\|\Delta(6\bm{e}_u^{n+1}-5\mathbf{e}_u^n)\|\nonumber\\
			& +C{ \|6\bm{u}(t^{n})-5\bm{u}(t^{n-1})\|_2}\|\nabla(\bm{u}(t^{n+5})-6\bm{u}(t^{n})+5\bm{u}(t^{n-1}))\|\|\Delta(6\bm{e}_u^{n+1}-5\bm{e}_u^n)\|\nonumber\\
			\leq& C(\|\bm{u}(t^{n+5})\|^2_2+{ \|6\bm{u}(t^{n})-5\bm{u}(t^{n-1})\|_2^2})\delta t^3\int^{t^{n+5}}_{t^{n-1}}\|\nabla\frac{\partial^2\bm{u}}{\partial t^2}(s)\|^2ds+\varepsilon\|\Delta(6\bm{e}_u^{n+1}-5\bm{e}_u^n)\|^2\nonumber\\
			\leq& C\delta t^3\int^{t^{n+5}}_{t^{n-1}}\|\nabla\frac{\partial^2\bm{u}}{\partial t^2}(s)\|^2ds+\varepsilon\|\Delta(6\bm{e}_u^{n+1}-5\bm{e}_u^n)\|^2.\nonumber
		\end{align}
		Combining (\ref{4.14}) to (\ref{4.30}), to ensure the stability we can choose $\varepsilon$ enough small such that $\frac{4}{5}>6\varepsilon+\frac{3}{4}$ and drop some unnecessary terms, we get
		\begin{align}
			& \frac{1}{10} \Bigl( \|\nabla \boldsymbol{e}_u^{n+1}\|^2 - \|\nabla \boldsymbol{e}_u^n\|^2 \Bigr) 
			+  \Bigl\|\frac{9\sqrt{10}}{5} \nabla \boldsymbol{e}_u^{n+1} - \frac{\sqrt{90}}{2} \nabla \boldsymbol{e}_u^n \Bigr\|^2 
			+ \frac{13}{2} \|\nabla (\boldsymbol{e}_u^{n+1} - \boldsymbol{e}_u^{n})\|^2 \nonumber\\
			&- \frac{9}{2} \|\nabla (\boldsymbol{e}_u^{n} - \boldsymbol{e}_u^{n-1})\|^2 
			+ \frac{8\delta t}{5} \|\Delta (6\boldsymbol{e}_u^{n+1} - 5\boldsymbol{e}_u^n)\|^2 
			+ \frac{2\delta t}{5} \|\Delta \boldsymbol{e}_u^{n+1}\|^2+ \delta t \Bigl( \|\Delta \boldsymbol{e}_u^{n+1}\|^2 - \|\Delta \boldsymbol{e}_u^n\|^2 \Bigr)\nonumber\\
			\leq &C \delta t\bigl\|6\bm{u}^n - 5\bm{u}^{n-1}\bigr\|_2^2
			{ \bigl\|\nabla\bigl(6\bm{e}_u^{n} - 5 \bm{e}_u^{n-1}\bigr)\bigr\|^2 +C\delta t\|\nabla(6\bm{e}_u^{n}-5\bm{e}_u^{n-1})\|^2}+C\delta t\|Grg(6\eta^n-5\eta^{n-1})\|^2\nonumber\\
			&+C\delta t \|\nabla\bm{e}_u^n\|^2(\|\Delta\bm{u}^n\|^2+\|\bm{u}(t^n)\|_2^2)+C\delta t{ \|\nabla\bm{e}_u^{n-1}\|^2}(\|\Delta\bm{u}^{n-1}\|^2+\|\bm{u}(t^{n-1})\|_2^2) \nonumber\\
			&+C\delta t^4\int^{t^{n+5}}_{t^{n-1}}\|\frac{\partial^3\bm{u}}{\partial t^3}\|^2+\|\nabla\frac{\partial^2 p}{\partial t^2}\|^2+\|\Delta\frac{\partial^2\bm{u}}{\partial t^2}\|^2+\|\nabla\frac{\partial^2\bm{u}}{\partial t^2}\|^2+\|\frac{\partial^2\theta}{\partial t^2}\|^2 ds. \label{4.31}
		\end{align}
		Next, by summing over \( n \) from \( 1 \) to \( m \) in (\ref{4.31}) and dropping some unnecessary terms, we get
		\begin{align}
			&\|\nabla\bm{e}_u^{m+1}\|^2+\delta t\sum_{n=0}^{m+1}\|\Delta\bm{e}_u^n\|^2\nonumber\\
			\leq& C\delta t\sum_{n=0}^{m}(\|\Delta\bm{u}^n\|^2+\|\Delta\bm{u}(t^n)\|^2)\|{ \nabla\bm{e}_u^n\|^2}+C\delta t\sum_{n=0}^{m}\|\eta^n\|^2+CT\delta t^4. \label{4.32}
		\end{align}
		Then, we take \( \bm{w} = -\Delta(6\eta^{n+1} - 5\eta^n) \) in (\ref{4.8}). For the first two terms, we obtain
		\begin{align} \label{4.33}
			&(11\eta^{n+1} - 20\eta^n + 9\eta^{n-1}, 
			-\Delta(6\eta^{n+1} - 5\eta^n))\\
			=& \frac{1}{10} (\|\nabla \eta^{n+1}\|^2-\|\nabla \eta^{n}\|^2 )
			+ \|\frac{9\sqrt{10}}{5} \nabla \eta^{n+1}-\frac{\sqrt{90}}{2} \nabla \eta^n\|^2 - \|\frac{9\sqrt{10}}{5} \nabla \eta^{n}-\frac{\sqrt{90}}{2} \nabla \eta^{n-1}\|^2  \nonumber\\
			&+\|\frac{\sqrt{90}}{2} \nabla \eta^{n+1}-\sqrt{90} \nabla \eta^n+\frac{\sqrt{90}}{2} \nabla \eta^{n-1}\|^2+\frac{13}{2}\|\nabla(\eta^{n+1}-\eta^n)\|^2- \frac{9}{2} \|\nabla (\eta^n - \eta^{n-1})\|^2 \nonumber\\
			&
			+ \frac{9}{2} \|\nabla (\eta^{n+1} - 2\eta^n + \eta^{n-1})\|^2 ,
			\nonumber
		\end{align}
		\begin{align}
			&2\delta t \,a\left(5\eta^{n+1}-4\eta^n,-\Delta( 6\eta^{n+1} - 5\eta^n)\right)\\
			= &\frac{8\delta t}{5}\|\Delta( \eta^{n+1} - 5\eta^n)\|^2+\frac{2\delta t}{5}\|\Delta\eta^{n+1}\|^2+\delta t(\|\Delta\eta^{n+1}\|^2-\|\Delta\eta^{n}\|^2+\|\Delta\eta^{n+1}-\Delta\eta^{n}\|^2).\nonumber
		\end{align}
		For the nonlinear terms, we deduce that
		{ 
		\begin{align}
			&b(6\bm{u}^n-5\bm{u}^{n-1},6\theta^{n}-5\theta^{n-1},\bm{w})-b(6\bm{u}(t^n)-5\bm{u}(t^{n-1}),6\theta(t^{n})-5\theta(t^{n-1}),\bm{w})\nonumber\\
			= &b(6\bm{e}^n_u-5\bm{e}^{n-1}_u,6\theta(t^{n})-5\theta(t^{n-1}),\bm{w})+b(6\bm{u}(t^n)-5\bm{u}(t^{n-1}),6\eta^{n}-5\eta^{n-1},\bm{w}).
		\end{align}}
		Hence, from (\ref{2.5}), we get
		{ 
		\begin{align}
			&b(6\bm{u}(t^n)-5\bm{u}(t^{n-1}),6\eta^{n}-5\eta^{n-1},-\Delta(6 \eta^{n+1} - 5\eta^n)\nonumber\\
			&\quad+b(6\bm{e}^n_u-5\bm{e}^{n-1}_u,6\theta(t^{n})-5\theta(t^{n-1}),-\Delta(6 \eta^{n+1} - 5\eta^n))\nonumber\\
			&\leq c(\varepsilon) \|\Delta(6\bm{u}(t^n)-5\bm{u}(t^{n-1}))\|^2\|\nabla(6\eta^{n}-5\eta^{n-1})\|^2\nonumber\\
			& \quad+c(\varepsilon) \|\Delta(6\theta(t^{n})-5\theta(t^{n-1}))\|^2\|\nabla(6\bm{e}_u^{n}-5\bm{e}_u^{n-1})\|^2+\varepsilon\|\Delta(6 \eta^{n+1} - 5\eta^n)\|^2.
		\end{align}}
		As for the right-hand side terms of the equation, they can be estimated in the same manner as in (\ref{4.25})-(\ref{4.30}).  
		\begin{align}
			&(T^n,-\Delta(6 \eta^{n+1} - 5\eta^n))\leq c(\varepsilon)\delta t^4\int_{t^{n-1}}^{t^{n+5}}\left\|\frac{\partial^3\theta}{\partial t^3}\right\|^2ds+\varepsilon \delta t\|\Delta(6 \eta^{n+1} - 5\eta^n)\|^2,
		\end{align}
		\begin{align}
			(W^n,-\Delta(6 \eta^{n+1} - 5\eta^n))\leq c(\varepsilon)\delta t^3\int_{t^{n}}^{t^{n+5}}\left\|\Delta\frac{\partial^2\theta}{\partial t^2}\right\|^2ds+\varepsilon \|\Delta(6 \eta^{n+1} - 5\eta^n)\|^2,
		\end{align}
		\begin{align}\label{4.39}
			&( E^n, -\Delta\bigl(6\eta^{n+1} - 5\eta^n\bigr) )
			\leq c(\varepsilon)\delta t^3\int_{t^{n-1}}^{t^{n+5}}\left(\left\|
			\nabla\frac{\partial^2\theta}{\partial t^2}\right\|^2+\left\|
			\frac{\partial^2\bm{u}}{\partial t^2}\right\|^2\right) ds 
			+ \varepsilon\|\Delta\bigl(6\eta^{n+1} - 5\eta^n\bigr)\|^2.
		\end{align}
		Combining (\ref{4.33})-(\ref{4.39}), and we choose $\varepsilon$ small enough such that $\frac{8}{5}>4\varepsilon$ , dropping some unnecessary terms, we get:
		\begin{align}
			& \frac{1}{10} \left( \|\nabla \eta^{n+1}\|^2 - \|\nabla \eta^n\|^2 \right) 
			+ \left\| \frac{9\sqrt{10}}{5}\nabla \eta^{n+1} - \frac{3\sqrt{10}}{2} \nabla \eta^n \right\|^2 
			- \left\| \frac{9\sqrt{10}}{5}\nabla \eta^{n} - \frac{3\sqrt{10}}{2} \nabla \eta^{n-1}\right\|^2 \nonumber\\
			& + \frac{13}{2} \|\nabla (\eta^{n+1} - \eta^n)\|^2 - \frac{9}{2} \|\nabla (\eta^{n} - \eta^{n-1})\|^2 
			+ \frac{8\delta t}{5} \|\Delta (6\eta^{n+1} - 5\eta^n)\|^2 
			+ \frac{2\delta t}{5} \|\Delta \eta^{n+1}\|^2 \nonumber\\
			& + \delta t \left( \|\Delta \eta^{n+1}\|^2 - \|\Delta \eta^n\|^2 \right) \nonumber\\
			\leq & c(\varepsilon) \delta t \left\| 6\bm{u}(t^n) - 5\bm{u}(t^{n-1}) \right\|_2^2 { \left\| \nabla (6\eta^{n} - 5\eta^{n-1}) \right\|^2 }
			+ c(\varepsilon) \delta t \left\| 6\theta(t^{n}) - 5\theta(t^{n-1}) \right\|_2^2 { \left\| \nabla (6\bm{e}_u^{n} - 5\bm{e}_u^{n-1}) \right\|^2} \nonumber\\
			& + C\delta t^4 \int_{t^{n-1}}^{t^{n+5}} \left( 
			\left\| \frac{\partial^3\theta}{\partial t^3} \right\|^2 
			+ \left\| \Delta \frac{\partial^2\theta}{\partial t^2} \right\|^2 
			+ \left\| \nabla \frac{\partial^2\theta}{\partial t^2} \right\|^2 
			+ \left\| \frac{\partial^2\bm{u}}{\partial t^2} \right\|^2 
			\right) ds. \label{4.40}
		\end{align}
		By summing in (\ref{4.40}) over \( n \) from \( 1 \) to \( m \) and setting \( \varepsilon = \frac{1}{100} \), we obtain
		\begin{align}
			&\|\nabla\eta^{m+1}\|^2+\delta t\sum_{n=0}^{m+1}\|\Delta\eta^n\|^2 \leq C\delta t\sum_{n=0}^{m}\|\Delta\bm{u}(t^n)\|^2{ \|\nabla\eta^n\|^2}+ C\delta t\sum_{n=0}^{m}\|\Delta\theta(t^n)\|^2\|\nabla\bm{e}_u^n\|^2+CT\delta t^4. \label{4.41}
		\end{align}
		By taking the sum in (\ref{4.32}) and (\ref{4.41}), we obtain
		\begin{align}\label{4.46}
			&\|\nabla\eta^{m+1}\|^2+\|\nabla\bm{e}_u^{m+1}\|^2+\delta t\sum_{n=0}^{m+1}(\|\Delta \eta^n\|^2+\|\Delta\bm{e}_u^n\|^2)\\
			&\leq C\delta t\sum_{n=0}^{m}(\|\Delta\bm{u}^n\|^2+\|\Delta\bm{u}(t^n)\|^2+\|\Delta\theta(t^n)\|^2)({ \|\nabla\bm{e}_u^n\|^2+\|\nabla\eta^n\|^2})+C\delta t^4.\nonumber
		\end{align}
		Finally, by applying Lemma \ref{Lem2.3} to (\ref{4.46}), we obtain
		\begin{align}
			\|\nabla\eta^{m+1}\|^2+\|\nabla\bm{e}_u^{m+1}\|^2+\delta t\sum_{n=0}^{m+1}(\|\Delta\eta^n\|^2+\|\Delta\bm{e}_u^n\|^2)\leq C\delta t^4.
		\end{align}
		Next, from (\ref{4.20}) and using the same approach as in (\ref{4.23}), we derive
		\begin{align}
			\|\nabla e_p^n\|^2 \leq& c\|\nabla\bm{e}_u^n\|^2\left(\|\Delta\bm{u}^n\|^2 + c\|\bm{u}(t^n)\|_2^2\right) + \|\nabla\bm{e}_u^{n-1}\|^2\left(\|\Delta\bm{u}^{n-1}\|^2 + c\|\bm{u}(t^{n-1})\|_2^2\right)\notag\\
			&+ \|6\eta^n\| + \|5\eta^{n-1}\| + \|\nabla p_s(6\bm{e}_u^n - 5\bm{e}_u^{n-1})\|. \label{4.44}
		\end{align}
		Then, by taking the sum in (\ref{4.44}) over \( n \) from \( 1 \) to \( m \), we obtain that
		\begin{align}
			\delta t\sum_{n=0}^{m+1}\|\nabla e_p^n\|^2 \leq C \delta t^4,
		\end{align}
		where the constant \( C \) is independent of \( \delta t \) and \( n \), but may depend on known quantities such as \( T \), \( \Omega \), and the exact solutions. This completes the proof.
	\end{proof}
	\section{Numerical results}
\setcounter{equation}{0}
In this section, we present numerical examples to demonstrate the performance of the new second-order consistent splitting scheme, with implementations carried out using FreeFEM++; we consider the natural convection equations (\ref{1.1}) defined on the spatiotemporal domain \([0,T] \times \Omega = [0,1] \times [0,1]^2\), where a no-slip boundary condition is imposed on the velocity \(\bm{u}\). To quantitatively validate the scheme's accuracy, we adopt the following trigonometric exact solutions: 
\begin{align*} 
	\bm{u}_1(x,y,t) = & \sin(2\pi x + t)\sin(2\pi y + t), \\ \bm{u}_2(x,y,t) = & \cos(2\pi x + t)\cos(2\pi y + t), \\ p(x,y,t) = & \sin(2\pi(x + y) + t), \\ \theta(x,y,t) = & \sin(2\pi x + t)\sin(2\pi y + t).
\end{align*}
 The body force, heat sources, and boundary conditions for the natural convection equations are derived directly from these exact solutions, which ensures the numerical results can be compared against a known reference and eliminates ambiguity in error quantification. We use the \(P1b$-$P1$-$P1b\) element pair to approximate velocity (\(\bm{u}\)), pressure (\(p\)), and temperature (\(\theta\)) respectively, and these finite element spaces satisfy the LBB (Ladyzhenskaya-Babuška-Brezzi) condition-a critical requirement to avoid pressure oscillations and ensure stable coupling between velocity and pressure. We set the time step as \(\delta t = 1/(10n)\), where \(n\) is a loop variable ranging over \(n = 1, 2, \dots, N\) (with \(N\) denoting the spatial mesh refinement parameter and specific values set as \(N = 8, 16, 24, 32,40,48\) to cover 8 refinement levels for convergence analysis), and the spatial mesh size is \(h = 1/N\), consistent with the refinement parameter \(N\) in the time step definition to ensure synchronized spatiotemporal refinement. To assess the numerical accuracy of the scheme, we compute the following error norms (quantifying the deviation between numerical and exact solutions): for velocity \(\bm{u}\), we calculate the \(L^2\)-norm error (measuring overall function deviation) and \(H^1\)-norm error (measuring deviations in both function values and their gradients); for temperature \(\theta\), we also compute the \(L^2\)-norm error and \(H^1\)-norm error (consistent with velocity to maintain uniform accuracy assessment); for pressure \(p\), we only compute the \(L^2\)-norm error-a standard choice for pressure error evaluation in incompressible flow problems, as pressure gradients (rather than absolute values) dominate flow dynamics.
\begin{table}[htbp]
	\centering 
	\caption{The numerical results with $\nu=1$, $\lambda=1$, $Gr=1$ and $T=1$.}\label{T1}
	\begin{tabular}{lllllllll}
		\toprule
		$1/h$ & $\dfrac{\| \boldsymbol{u} - \boldsymbol{u}_h \|_0}{\| \boldsymbol{u} \|_0}$ & $\dfrac{\| \boldsymbol{u} - \boldsymbol{u}_h \|_1}{\| \boldsymbol{u} \|_1}$ & $\dfrac{\| p - p_h \|_0}{\| p \|_0}$ & $\dfrac{\| \theta - \theta_h \|_0}{\| \theta \|_0}$ & $\dfrac{\| \theta - \theta_h \|_1}{\| \theta \|_1}$  & CPU(s) \\
		\midrule
	8  & 0.302232 & 0.401152 & 0.107112 & 0.298551 & 0.400572 & 4.573\\
16  & 0.0945844 & 0.193611 & 0.0374296 & 0.0938261 & 0.193494 & 38.953 \\
	24  & 0.0446914 & 0.126788 & 0.0183279 & 0.0442889 & 0.126743 & 155.397\\
32 & 0.0256652 & 0.0943124 & 0.0106265 & 0.0254507 & 0.0942928 & 427.721\\
40  & 0.0166122 & 0.0751317 & 0.00691883 & 0.0164705 & 0.075121 & 
 967.995\\
	48  & 0.0116094 & 0.062458 & 0.00485108 & 0.0115088 & 0.0624516 & 9718.53 \\
	56  & 0.0085737 & 0.0534549 & 0.00359346 & 0.00849536 & 0.0534507 &  4790.05 \\
64  & 0.00658027 & 0.0467262 & 0.00276125 & 0.00651992 & 0.0467233 & 7256.67 \\
		\bottomrule
	\end{tabular}
	\label{tab:nu1}
\end{table}

\begin{table}[htbp]
	\centering
	\caption{Convergence orders with $\nu=1$, $\lambda=1$, $Gr=1$ and $T=1$.}
	\label{T2}
	\begin{tabular*}{0.66\textwidth}{llllll
		}
		\toprule
		$1/h$ & $u_L^2$-order & $u_H^1$-order & $p_L^2$-order &  $\omega_L^2$-order & $\omega_H^1$-order \\
		\midrule
		8   &--- & ---   &---   & ---  & ---  \\
	 $16$& 1.6760 & 1.0510 & 1.5169 & 1.6699 & 1.0498 \\
	  $24$& 1.8490 & 1.0441 & 1.7610 & 1.8450 & 1.0435 \\
	  $32$& 1.9280 & 1.0286 & 1.8947 & 1.9257 & 1.0281 \\
	  $40$& 1.9494 & 1.0186 & 1.9230 & 1.9502 & 1.0186 \\
	  $48$& 1.9653 & 1.0133 & 1.9474 & 1.9630 & 1.0131 \\
	  $56$& 1.9742 & 1.0098 & 1.9467 & 1.9720 & 1.0096 \\
	  $64$& 1.9817 & 1.0075 & 1.9728 & 1.9820 & 1.0074 \\
		\bottomrule
	\end{tabular*}
\end{table}

\begin{table}[htbp]
	\centering
	\caption{The numerical results with $\nu=0.01$, $\lambda=1$, $Gr=1$ and $T=1$.} \label{T3}
	\begin{tabular}{lllllllll}
		\toprule
		$1/h$ & $\dfrac{\| \boldsymbol{u} - \boldsymbol{u}_h \|_0}{\| \boldsymbol{u} \|_0}$ & $\dfrac{\| \boldsymbol{u} - \boldsymbol{u}_h \|_1}{\| \boldsymbol{u} \|_1}$ & $\dfrac{\| p - p_h \|_0}{\| p \|_0}$ & $\dfrac{\| \theta - \theta_h \|_0}{\| \theta \|_0}$ & $\dfrac{\| \theta - \theta_h \|_1}{\| \theta \|_1}$ &  CPU(s) \\
		\midrule
	8 & 0.727041 & 1.28491 & 0.190067 & 0.807644 & 1.28692 & 2.632 \\
	16 & 0.340417 & 0.49406 & 0.134762 & 0.341902 & 0.498408 & 37.351 \\
	24 & 0.189821 & 0.284953 & 0.0902497 & 0.205459 & 0.310247 & 154.921 \\
	32 & 0.114041 & 0.182267 & 0.0597876 & 0.115895 & 0.18973 & 435.588 \\
	40 & 0.0769314 & 0.130048 & 0.0423899 & 0.0770733 & 0.134026 & 1075.78 \\
	48 & 0.055268 & 0.0989118 & 0.0313759 & 0.0548273 & 0.101176 & 2094.73 \\
	56 & 0.041986 & 0.0792107 & 0.0242714 & 0.0421359 & 0.081463 & 4057.15 \\
	64 & 0.0326544 & 0.0652822 & 0.0191332 & 0.0325573 & 0.0667543 & 7226.94 \\
\hline
		\bottomrule
	\end{tabular}
	\label{tab:nu0}
\end{table}

\begin{table}[htbp]
	\centering
	\caption{Convergence orders with $\nu=0.01$, $\lambda=1$, $Gr=1$ and $T=1$.}
	\label{T4}
	\begin{tabular*}{0.66\textwidth}{llllll
		}
		\toprule
		$1/h$ & $u_L^2$-order & $u_H^1$-order & $p_L^2$-order &  $\theta_L^2$-order & $\theta_H^1$-order \\
		\midrule
		8   &--- & ---   &---   & ---  & ---  \\
		$16$ & 1.0941 & 1.3797 & 0.4960 & 1.2417 & 1.3707 \\
		$24$ & 1.5370 & 1.3437 & 0.8641 & 1.3917 & 1.2823 \\
		$32$ & 1.7901 & 1.4033 & 1.2248 & 1.7307 & 1.3610 \\
		$40$ & 1.7770 & 1.3762 & 1.4247 & 1.7898 & 1.3400 \\
		$48$ & 1.8044 & 1.3164 & 1.5104 & 1.8427 & 1.2903 \\
		$56$ & 1.8340 & 1.2492 & 1.5500 & 1.7884 & 1.2198 \\
		$64$ & 1.8744 & 1.2072 & 1.5780 & 1.8792 & 1.1811 \\
		\bottomrule
	\end{tabular*}
\end{table}

Table \ref{T1} and Table \ref{T3} present the error behaviors and computational efficiency of the numerical solutions with mesh refinement under different viscosity coefficients \(\nu\).  With mesh refinement, errors gradually diminish and achieve optimal convergence rates. The convergence order is shown in Table \ref{T2} and Table \ref{T4}, and the \(L^2\)-norm convergence order is nearly theoretically optimal order.  Furthermore, increasing the viscosity coefficient \(\nu\) does not change the convergence trend of errors. Notably, for the same mesh, the absolute errors when \(\nu=1\) are generally smaller than those when \(\nu=0.01\). This is a common phenomenon in incompressible fluid dynamics: the higher the viscosity, the more stable the flow pattern.

	{ 
	
	\section{Conclusion}
	
	In this paper, we developed a new  second-order consistent splitting schemes for the natural convection equations subject to no-slip boundary conditions. The main idea is to introduce a positive integer parameter \(k\) and construct the schemes through Taylor expansions about the shifted time level \(t^{n+k}\). 
	
	This framework generalizes the conventional construction based on expansions about \(t^{n+1}\). The stability analysis shows that the coefficient condition required by the energy argument is satisfied when \(k>4\). Accordingly, we selected \(k=5\), the smallest admissible integer, and established the stability and convergence properties of the resulting scheme. Under suitable regularity assumptions, stability estimates and error bounds were derived in both two and threedimensional settings.
	
	The numerical experiments are consistent with the theoretical results and confirm the expected second-order temporal convergence. They also demonstrate that the proposed splitting scheme provides an accurate and computationally effective approach for solving the natural convection equations.					
	
	Future work will address long-time and global stability, examine how different choices of \(k\) affect accuracy, stability, and computational cost, and extend the shifted-expansion framework to higher-order consistent splitting schemes for natural convection problems.

}

\end{document}